\newtheorem{theorem}{Theorem}[section]
\newtheorem{corollary}[theorem]{Corollary}
\newtheorem{lemma}[theorem]{Lemma}
\theoremstyle{definition}
\newtheorem*{claim*}{Claim}
\newtheorem*{question*}{Question}
\newtheorem*{answer*}{Answer}
\newtheorem*{application*}{Application}
\theoremstyle{remark}
\newtheorem{remark}[theorem]{Remark}
\newtheorem*{remark*}{Remark}
\newcommand{\Sr}{$S\times \mathbb{R}$ }
\newcommand{\Hc}{$\mathcal{HC}(S, \alpha)$}
\newcommand{\param}{{\mathchoice{\mkern1mu\mbox{\raise2.2pt\hbox{$
\centerdot$}}
\mkern1mu}{\mkern1mu\mbox{\raise2.2pt\hbox{$\centerdot$}}\mkern1mu}{
\mkern1.5mu\centerdot\mkern1.5mu}{\mkern1.5mu\centerdot\mkern1.5mu}}}
\renewcommand{\setminus}{{\smallsetminus}}
\begin{document}

\title       {A Curve Complex and Incompressible Surfaces in $S\times \mathbb{R}$}
\author   {Ingrid Irmer}
\address {Department of Mathematics\\
               National University of Singapore\\
               Block S17
               10, Lower Kent Ridge Road
               Singapore 119076 }
\email      {matiim@nus.edu.sg}
\maketitle

\begin{abstract}
Various curve complexes with vertices representing multicurves on a surface $S$ have been defined, for example \cite{BBM}, \cite{Hatcher} and \cite{Me2}. The homology curve complex $\mathcal{HC}(S,\alpha)$ defined in \cite{Me} is one such complex, with vertices corresponding to multicurves in a nontrivial integral homology class $\alpha$. Given two multicurves $m_1$ and $m_2$ corresponding to vertices in $\mathcal{HC}(S,\alpha)$, it was shown in \cite{Me2} that a path in $\mathcal{HC}(S,\alpha)$ connecting these vertices represents a surface in $S\times \mathbb{R}$, and a simple algorithm for constructing minimal genus surfaces of this type was obtained. In this paper, a Morse theoretic argument will be used to prove that all embedded orientable incompressible surfaces in $S\times \mathbb{R}$ with boundary curves homotopic to $m_{2}-m_1$ are homotopic to a surface constructed in this way. This is used to relate distance between two vertices in $\mathcal{HC}(S,\alpha)$ to the Seifert genus of the corresponding link in $S\times \mathbb{R}$.

\end{abstract}
%\tableofcontents

\section{Introduction} \label{Intro}
Suppose $S$ is a closed oriented surface, each connected component of which has genus at least 2. Let $\pi$ be the projection of $S\times \mathbb{R}$ onto $S\times 0$ given by $(s,r)\mapsto (s,0)$. A \textit{multicurve} in $S\times \mathbb{R}$ is a one dimensional embedded submanifold that projects onto a one dimensional embedded submanifold of $S\times 0$. It is also assumed that multicurves do not contain curves that bound discs.\\

Fix a nontrivial element $\alpha$ of $H_{1}(S,\mathbb{Z})$. The \textit{homology curve complex}, $\mathcal{HC}(S,\alpha)$, is a simplicial complex whose vertex set is the set of all isotopy classes of oriented multicurves in $S$ in the homology class $\alpha$. A set of vertices $m_{1},\ldots, m_k$ spans a simplex if the representatives of the isotopy classes can all be chosen to be disjoint. \\

As described in \cite{Me2}, a path in $\mathcal{HC}(S,\alpha)$ corresponds to a surface in $S\times \mathbb{R}$. This construction will be briefly repeated in section \ref{surfaceconstruction}. The main theorem of this paper, theorem \ref{minimalgenus}, shows a converse of this.\\

In the special case that $H$ is an embedded surface with boundary contained in two level sets, theorem \ref{minimalgenus} is known; for example section 2 of \cite{Floyd}. 

All homotopies of surfaces in $S\times \mathbb{R}$ are assumed to be smooth, and are allowed to move the boundaries of surfaces. Let $m_1$ and $m_2$ be homologous multicurves in $S\times \mathbb{R}$ representing vertices in $\mathcal{HC}(S,\alpha)$.

\begin{theorem}
\label{minimalgenus}

Suppose $H$ is an oriented, embedded, connected, incompressible surface in $S\times \mathbb{R}$ with boundary $m_{2}-m_1$. Then there exists a path $\gamma$ in $\mathcal{HC}(S,\alpha)$ connecting the vertices corresponding to $m_1$ and $m_2$ such that $H$ is homotopic to an embedded surface constructed from $\gamma$.
\end{theorem}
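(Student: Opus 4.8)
The plan is to study the restriction to $H$ of the height function $h\from S\times\RR\to\RR$, $h(s,r)=r$, and to read the path $\gamma$ off the combinatorics of its level sets; this is the Morse-theoretic core of the argument. First I would put $H$ in a normal position: after an ambient isotopy of $S\times\RR$ I may assume $m_1\subset S\times\{0\}$, $m_2\subset S\times\{1\}$, that $H$ meets a collar of each boundary component in a vertical annulus, and, after a further small ambient isotopy, that $h|_H$ is a Morse function whose critical values are distinct and lie in $(0,1)$. For a regular value $t$ the level set $L_t:=h^{-1}(t)\cap H$ is a disjoint union of embedded circles in $S\times\{t\}\homeo S$, oriented by the orientation of $H$; cutting $H$ along $L_t$ and inspecting the part lying below level $t$ shows that for $0<t<1$ the cycle $L_t$ is homologous to $m_1$ in $S\times(-\infty,t]\simeq S$, so $[L_t]=\alpha$. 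Let $\mu(t)$ be the oriented multicurve obtained from $L_t$ by deleting the components that bound discs in $S\times\{t\}$. Since $\alpha\neq 0$ it is nonempty, and by the previous remark it represents a vertex of \Hc.

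The crucial observation is that extrema of $h|_H$ do not affect $\mu$. Near an interior local maximum (or minimum) at height $c$, a level set just below (resp.\ just above) $c$ contains a small circle $\delta$ bounding a disc in $H$, hence a disc in $S\times\RR$; were $\delta$ essential in its level surface it would be nontrivial in $\pi_1(S\times\RR)$, since $S\times\{c-\epsilon\}\hookrightarrow S\times\RR$ is $\pi_1$-injective, contradicting incompressibility of $H$. So $\delta$ bounds a disc in its level surface and $\mu$ is unchanged across $c$. At a saddle the level set undergoes a single band move supported in a small disc; orientability of $H$ makes the move orientation-coherent, so $[L_t]$ is preserved, and the multicurves just below and just above can be made disjoint (the two non-crossing resolutions of the four band endpoints are realizable disjointly), so they coincide or span an edge of \Hc. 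Over an interval of regular values the circles of $L_t$ sweep out disjoint annuli in $H$, giving free homotopies in $S$ between corresponding components; since freely homotopic essential simple closed curves on a surface are isotopic, $\mu(t)$ is constant up to isotopy on such an interval. Recording the vertices $\mu(t)$ as $t$ runs from $0$ to $1$, and noting that $\mu(0^+)=[m_1]$ and $\mu(1^-)=[m_2]$ because $m_1,m_2$ are multicurves, produces the required path $\gamma$.

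It remains to identify $H$, up to homotopy, with a surface built from $\gamma$ as in~\cite{Me2}. First I would remove the disc-bounding level circles and confine $H$ to $S\times[0,1]$: an innermost disc-bounding circle in a level surface bounds a disc meeting $H$ only in its boundary, hence, by incompressibility, a disc in $H$; the resulting $2$-sphere bounds a ball since $S\times\RR$ is irreducible (as $S$ is not a sphere), and pushing $H$ across it lowers the complexity, while any part of $H$ above level $1$ or below level $0$ is null-homotopic rel boundary and can be homotoped in. After finitely many such moves every level set of $h|_H$ is essential, so $h|_H$ has only saddles, and $H$ is assembled from vertical pieces $\mu(t)\times I$ over intervals of regular values together with standard band cobordisms over the saddles — precisely the local pieces out of which the construction of~\cite{Me2} builds the surface associated to $\gamma$ — so an induction over the critical values, extending a homotopy one critical level at a time, yields a homotopy from $H$ to a surface so constructed.

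The step I expect to be the main obstacle is this last identification: showing that the normalized surface is not merely combinatorially similar to a surface built from $\gamma$ but genuinely homotopic to one, while keeping straight that the normalization and cleanup must be carried out by ambient isotopies — embeddedness is what allows the use of incompressibility and irreducibility — whereas the final identification genuinely uses homotopies, as the theorem permits. A secondary point needing care is the bookkeeping at saddles whose band move involves disc-bounding components, to confirm that the transition is still an edge of \Hc (or the identity) and that the class $\alpha$ is preserved throughout.
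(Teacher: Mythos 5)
The core of your plan—reading a path in $\mathcal{HC}(S,\alpha)$ off the level sets of the height function, using incompressibility and $\pi_1$-injectivity of level surfaces to discard inessential circles, and treating saddles as band moves—is the right Morse-theoretic intuition, and it matches the spirit of the paper. But your opening normalization step is where the argument breaks down, and it breaks down in a way that begs the question.

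You begin by asserting that ``after an ambient isotopy of $S\times\RR$ I may assume $m_1\subset S\times\{0\}$, $m_2\subset S\times\{1\}$.'' This is not available for free. The paper states explicitly (just before \lemref{morsetheory}) that ``Whenever $H$ is embedded in $S\times\RR$, it is a \emph{corollary} of theorem \ref{minimalgenus} that there is a homotopy of $H$ that takes $H$ to an embedded surface in $S\times[a,b]$ with boundary contained in the level sets,'' and it notes that the case of boundary already in two level sets was known (Floyd). The multicurves $m_1$ and $m_2$ live in $S\times\RR$ and a priori need not be, and in general cannot be, simultaneously flattened into two disjoint level surfaces by an \emph{ambient} isotopy: if a component of $m_1$ lies above a component of $m_2$ at a point where their projections to $S\times 0$ essentially cross, an ambient isotopy cannot swap their heights without forcing them to coincide. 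A non-ambient homotopy could do it, but then embeddedness of the resulting surface—which the theorem requires and which you rely on for the level sets $L_t$ to be embedded—is no longer automatic. So the hypothesis that makes the rest of your argument run is precisely the hard part of the theorem.

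This also explains why your proof never encounters the phenomenon the paper calls \emph{bow tie handles}: once the boundary is pinned to two level sets, the height function has no boundary critical points and every saddle is an ordinary, orientation-coherent band. But the paper shows that with boundary not in level sets, bow tie handles (handles attached with a half twist, so that the two sides project to $S\times 0$ with opposite orientations) genuinely occur and must be paired up or shown fake; this is the content of the figures and of the final subsection of the proof. Your argument gives no mechanism for this. The paper's actual strategy is necessarily more incremental: it chooses the zero of $R$ so that a \emph{single} boundary component $c_0$ sits in $S\times 0$ (\lemref{collar}, \lemref{boring}), builds the pants decomposition one pant at a time by sliding the level and reordering critical points so the next surgery is an ordinary handle, and only at the end—via the pants decomposition itself, see \remref{remark} and Figure~\ref{remarkdiagram}—establishes that a zero of $R$ exists making all of $m_1$ simultaneously up-collared. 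Your proposal needs that final conclusion at the outset, which is why it is circular as written.

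A secondary, smaller point: the identification of the normalized surface with a surface built from $\gamma$ is asserted as ``precisely the local pieces'' of the construction in \cite{Me2}, but the construction in \secref{surfaceconstruction} glues whole subsurfaces of $S\times 0$, not bands; the translation from saddle band cobordisms to embedded pants in $S\times 0$ (and in particular that the attaching band projects injectively, rather than winding) is exactly what the paper's ``ordinary handle'' analysis verifies and what fails for bow tie handles. This part could likely be repaired, but only after the main gap above is closed.
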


%No assumption is made that the curves on the boundary of a surface are contained in a level set of $\mathbb{R}$.\\

%In the special case that $H$ is an embedded surface with boundary contained in two level sets, theorem \ref{minimalgenus} is known; for example section 2 of \cite{Floyd}.\\

The basic idea of the proof is to use the coordinate obtained by projecting the surface onto the second component of \Sr\  as a Morse function $m_{R}$ on the surface. This is shown to be possible in lemma \ref{morsetheory}.\\
 %The reason for requiring this function to be Morse is that, when $H$ is embedded, the level sets of $R$ have controlled intersection properties, which will be used for constructing a specific handle decomposition of $H$ that can be related to a path in $\mathcal{HC}(S,\alpha)$. Orientability of $H$ is necessary to rule out pathological ``handles'' that do not project nicely into $S\times 0$.\\

An edge in $\mathcal{HC}(S,\alpha)$ represents the boundary of a union of surfaces that project one to one onto subsurfaces of $S\times \{0\}$. Given a path $\gamma$ in $\mathcal{HC}(S,\alpha)$ connecting the vertices $m_1$ and $m_2$, a surface with boundary $m_{2}-m_{1}$ is constructed by gluing together subsurfaces represented by edges. This construction is discussed in more detail in the next section.\\

%\textbf{Simple paths and embeddedness.} An edge in $\mathcal{HC}(S,\alpha)$ connecting two vertices representing the %multicurves $\gamma_{i}$ and $\gamma_{i+1}$ is called \textit{simple} if $\gamma_{i+1}-\gamma_i$ is the oriented %boundary of an embedded subsurface of $S$. Subsurfaces are assumed to be oriented as subsurfaces of $S$. The %convention is that if $\gamma_{i+1}$ and $\gamma_i$ both contain a curve $c$, the submulticurve $c-c$ of %$\gamma_{i+1}-\gamma_i$ bounds an annulus, not the empty set. It then follows that a \textit{simple path}, i.e. a path %that only passes through simple edges, gives rise to an embedded surface in $S\times \mathbb{R}$. Since the path %$\gamma$ constructed in the proof of theorem \ref{minimalgenus} is simple, the associated surface is embedded.\\

%\begin{defn}[Homotopic surfaces with boundary]
%Two surfaces in $M$ with boundary, $F_1$ and $F_2$, will be said to be homotopic if there is a homeomorphism $h(x,t)$ of $F_{1}\times \left[0,1\right]$ into $M$ such that $h(x,0)$ is the identity on $F_1$, $h(x,1)$ maps $F_1$ onto $F_2$, the image of $h(x,t)$ is an immersed surface in $M$ for any fixed $t$ and the restriction of $h$ to the boundary of $F_1$ is a free homotopy of $\partial F_1$ to $\partial F_2$.

%\end{defn}
The geometric intersection number, $i(m_{1},m_{2})$, of two multicurves in $S\times \mathbb{R}$ is defined by projecting onto $S\times 0$. Recall that the geometric intersection number of two multicurves $m_1$ and $m_2$ is the minimum possible number of intersections between a pair of multicurves, one of which is isotopic to $m_1$ and the other to $m_2$.\\

The \textit{distance}, $d_{\mathcal{H}}(v_{1},v_{2})$, between two vertices $v_1$ and $v_2$ in $\mathcal{HC}(S,\alpha)$ is defined to be the distance in the path metric of the one-skeleton, where all edges have length one.\\

%\begin{theorem}
%\label{afterthought}
%Consider the set of all homotopy classes of orientable surfaces in $S\times \mathbb{R}$ with boundary $m_{2}-m_{1}$. Let $\mathcal{F}$ be the subset with minimal genus. Then $\mathcal{F}$ always contains an embedded surface.
%\end{theorem}

In \cite{Me2} it was shown that the smallest possible genus of a surface in \Sr\  with boundary homotopic to $m_{2}-m_1$ provides a bound from below on distance in $\mathcal{HC}(S,\alpha)$ between two vertices represented by multicurves $m_1$ and $m_2$. Theorem \ref{minimalgenus} shows the converse, namely, the smallest possible genus of a surface in \Sr\  with boundary curves homotopic to $m_{2}-m_1$ gives a bound from above on the distance in $\mathcal{HC}(S,\alpha)$ between $m_1$ and $m_2$. A corollary of theorem \ref{minimalgenus} is used in \cite{Me2} to obtain a simple, $\mathcal{O}(i(m_{1},m_{2}))$ algorithm for constructing minimal genus surfaces in \Sr\  with boundary $m_{2}-m_{1}$. This is in contrast to the problem of finding a minimal genus surface of a knot embedded in a general 3-manifold, which was shown in \cite{AHT} to be \textbf{NP}-complete. %The following is a corollary of Theorem \ref{minimalgenus} and Theorem 1 of \cite{one}.

\begin{corollary}[Distance in \Hc\ and genus of surfaces]
Let $d_{\mathcal{C}}(m_{1},m_{2})$ be the distance in $\mathcal{HC}(S,\alpha)$ between the vertices corresponding to the multicurves $m_1$ and $m_2$, and $g_H$ be the smallest possible genus of a surface in \Sr\  with boundary $m_2-m_1$. Then there exist constants $k_1$ and $k_2$ depending only on the genus of $S$ such that
\begin{equation*}
k_{1}g_{H} \leq d_{\mathcal{C}}(m_{1},m_{2}) \leq k_{2}g_{H}
\end{equation*}
\end{corollary}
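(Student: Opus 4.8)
The plan is to obtain the two estimates from two independent inputs: the upper bound on $d_{\mathcal{C}}(m_1,m_2)$ from Theorem~\ref{minimalgenus}, and the lower bound from the genus-to-distance estimate of \cite{Me2} recalled above. Write $g(S)$ for the largest genus of a component of $S$. The one elementary fact used throughout is that a multicurve on $S$ with no nullhomotopic component has at most $3g(S)-3$ components, so the number $b$ of boundary circles of \emph{any} surface with boundary $m_2-m_1$ satisfies $b\le 6g(S)-6$; hence, for a connected surface $H$ of genus $g_H$ with this boundary, $-\chi(H)=2g_H-2+b\le 2g_H+6g(S)-8$.

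\emph{Upper bound.} Let $H$ realise the minimal genus $g_H$; after standard modifications (compressing along compressing discs, tubing components together) we may assume $H$ is incompressible and connected at the cost of changing the genus by an additive constant depending only on $g(S)$. By Theorem~\ref{minimalgenus}, $H$ is homotopic to a surface $H_\gamma$ built from a path $\gamma$ in $\mathcal{HC}(S,\alpha)$ joining $m_1$ and $m_2$, so $d_{\mathcal{C}}(m_1,m_2)\le\operatorname{length}(\gamma)$, and it remains to bound $\operatorname{length}(\gamma)$ linearly in $g_H$. For this I would examine the construction behind Theorem~\ref{minimalgenus}: taking the Morse function $m_R$ of Lemma~\ref{morsetheory} to be excellent and Morse on $\bdy H$, its regular level sets are the multicurves $H\cap(S\times\{c\})$, which are automatically embedded in $S$ and hence vertices of $\mathcal{HC}(S,\alpha)$; the edges of $\gamma$ arise by passing critical levels, together with a bounded-per-critical-level amount of further subdivision needed to make each intervening slab project one-to-one onto $S$. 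An excellent $m_R$ has $O(-\chi(H))$ critical points, so this yields $\operatorname{length}(\gamma)=O(-\chi(H))=O(g_H+g(S))$, i.e.\ $d_{\mathcal{C}}(m_1,m_2)\le k_2 g_H$ after absorbing the additive constant, the case $g_H=0$ --- where $d_{\mathcal{C}}$ is anyway bounded in terms of $g(S)$ --- being handled separately.

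\emph{Lower bound.} Let $\gamma$ be a geodesic in $\mathcal{HC}(S,\alpha)$ from $m_1$ to $m_2$, of length $d_{\mathcal{C}}(m_1,m_2)$, and $H_\gamma$ the surface it constructs as in Section~\ref{surfaceconstruction}. Each edge of $\gamma$ contributes a subsurface of $S$ glued in one-to-one; such a subsurface has genus at most $g(S)$ and at most $6g(S)-6$ boundary circles, and $H_\gamma$ is obtained by gluing consecutive pieces along multicurves, so its first Betti number, hence its genus, is at most linear in the number of pieces: $g(H_\gamma)\le C_1\operatorname{length}(\gamma)$ with $C_1$ depending only on $g(S)$. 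Since $g_H\le g(H_\gamma)$, this gives $d_{\mathcal{C}}(m_1,m_2)\ge g_H/C_1=k_1 g_H$, which reproduces (and exhibits the linear shape of) the lower bound proved in \cite{Me2}.

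The step I expect to be the real obstacle is the length estimate in the upper bound: one must check that extracting an honest edge-path in $\mathcal{HC}(S,\alpha)$ from the Morse-theoretic surface of Theorem~\ref{minimalgenus} inflates its length by at most a bounded factor per critical level --- in particular that the extra vertices inserted to force every slab to project injectively onto $S$ are controlled by $g(S)$ alone, and not by an a priori unbounded geometric intersection number. Once this is granted, both inequalities rest on the single estimate $-\chi(H)\asymp g_H$ (which holds because $b\le 6g(S)-6$) together with the trivial remark that $d_{\mathcal{C}}$ is bounded in terms of $g(S)$ whenever $g_H$ is.
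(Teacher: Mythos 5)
Your approach matches the paper's: the lower bound $k_1 g_H\le d_{\calC}$ is quoted from \cite{Me2}, and the upper bound is read off from Theorem~\ref{minimalgenus}; the paper itself offers no further proof of the corollary beyond these two pointers. A few comments on the details you supply.

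The ``real obstacle'' you flag in the upper bound --- whether extracting an edge-path from the Morse decomposition can inflate length per critical level --- is in fact already resolved inside the proof of Theorem~\ref{minimalgenus}. The pants decomposition built in Section~\ref{Handledecomp} produces the path directly: each saddle of $m_R$ is arranged to cut off exactly one pair of pants homotopic into $S\times 0$, and the multicurves $\delta_1,\delta_2,\dots,\delta_n$ obtained this way give rise to exactly one new vertex $\gamma_i$ of the path each (via $\gamma_1:=(m_1\cup\eta\cup\beta)\setminus c_0$ and so on). So the number of edges is controlled by the number of pants, hence by $-\chi(H)=2g_H-2+b$, with no hidden multiplicative overhead from ``forcing slabs to project injectively.'' Combined with your observation that $b\le 6g(S)-6$ because $m_1$ and $m_2$ have no nullhomotopic components, this gives $\operatorname{length}(\gamma)=O(g_H+g(S))$, which is what is needed. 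Your lower-bound argument is a sound re-derivation of the input from \cite{Me2}: any edge-path of length $\ell$ in $\mathcal{HC}(S,\alpha)$ builds (any path between homologous disjoint multicurves is automatically simple, since $\gamma_{i+1}-\gamma_i$ bounds an embedded subsurface of $S$) a surface whose first Betti number is linear in $\ell$ with constant depending only on $g(S)$; the paper does not reproduce this, it simply cites it.

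One point worth stating more plainly than you do: as literally written, $d_{\calC}\le k_2 g_H$ fails when $g_H=0$ and $m_1\neq m_2$ (a planar surface can join distinct multicurves, yet the right side is zero), so the corollary implicitly needs an additive constant depending only on $g(S)$, i.e.\ the honest conclusion is $d_{\calC}\le k_2 g_H + k_2'$ with $k_2'$ absorbing the bounded number of boundary components. You gesture at this by ``handling $g_H=0$ separately,'' which is the correct fix; the paper leaves it unstated. Likewise your ``standard modifications'' (compressing, tubing) to reduce to a connected incompressible $H$ of comparable genus are the right move but are glossed in both your proposal and the paper; compression can only decrease genus (so a minimal-genus representative is incompressible without loss), and tubing at most $b-1\le 6g(S)-7$ components together only adds a $g(S)$-controlled amount to the genus.

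In summary: the proposal is correct, and it follows the paper's (terse) route; the additional detail you supply about Euler characteristic and boundary counting is exactly what would be needed to make the paper's one-line deduction rigorous.
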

\subsection*{Acknowledgements} I would like to thank Ursula Hamenst\"adt for her supervision of this project, and Dan Margalit for his advice.

\section{Constructing surfaces from paths.}
\label{surfaceconstruction}
Whenever this does not lead to confusion, the same symbol will be used for a vertex in $\mathcal{HC}(S,\alpha)$ and a multicurve in the corresponding isotopy class on $S$. Also, a path in $\mathcal{HC}(S,\alpha)$ will often be denoted by a sequence of multicurves, $m_{1}, m_{2},\ldots, m_{n}$ with the property that $m_i$ and $m_{i+1}$ are disjoint for every $1\leq i \leq n$, i.e. $m_i$ and $m_{i+1}$ represent an edge in $\mathcal{HC}(S, \alpha)$.\\

Let $\gamma:=\left\{\gamma_{0}, \gamma_{1}, \ldots\gamma_{j}\right\}$ be the vertices of a path in $\mathcal{HC}(S,\alpha)$. Since $S$ maps into $S\times \mathbb{R}$, each $\gamma_i$ represents a multicurve in $S\times \mathbb{R}$.\\

Consider constructing a surface $T_{\gamma}$ contained in $S\times \left\{j\right\}\subset S\times \mathbb{R}$ inductively as follows. Given $\gamma_0$, suppose we can isotope $\gamma_1$ such that there is a subsurface $S_1$ of $S$ with boundary $\gamma_{1}-\gamma_0$. Let $T_1$ be the surface in $S\times [0,1]$ given by $\gamma_{0}\times [0,\frac{1}{2}]\cup S_{1}\times \{\frac{1}{2}\}\cup \gamma_{1}\times [\frac{1}{2},1]$. Next, suppose we can isotope $\gamma_2$ so that there is a subsurface $S_2$ of $S$ with $\partial S_{2}=\gamma_{2}-\gamma_1$ and let $T_{2}=\gamma_{1}\times [1,\frac{3}{2}]\cup S_{2}\times \{\frac{3}{2}\}\cup \gamma_{2}\times [\frac{3}{2},2]$. Repeat this successively for each of the $\gamma_i$ until an embedded surface $T_{\gamma}=T_{1}\cup T_{2}\cup\ldots\cup T_{j}$ in $S\times [0,j]$ is obtained. To ensure that this construction yields an embedded surface, it is assumed that any null homologous submulticurve of $\gamma_{i+1}-\gamma_i$ of the form $c-c$ bounds an annulus, not the empty set.\\

\textbf{Simple paths and embeddedness}. A path in $\mathcal{HC}(S,\alpha)$ for which a surface can be constructed as described in the previous paragraph is called a \textit{simple path}. It is not hard to show that the definition is symmetric in $\gamma_0$ and $\gamma_j$. Unlike in \cite{Me2}, we do not require the components of $S_i$ to be oriented as subsurfaces of $S$. As a result, it will be clear from the construction that all the paths constructed in the proof of theorem \ref{minimalgenus} are simple. It follows that the resulting surfaces are embedded.\\

When constructing a surface in \Sr\  from a path $\gamma$ in \Hc, at every step there may be a choice involved as to whether to attach $S_i$, or its complement in $S\times 0$ with the opposite orientation. Call all such surfaces \textit{surfaces constructed from} $\gamma$. Given a surface, a corresponding path in $\mathcal{HC}(S, \alpha)$ is not generally unique.

\section{Morse theory with boundary}

All multicurves and manifolds are assumed to be smooth throughout this paper. The manifold $S\times \mathbb{R}$ is given a product metric $ds^{2}_{M}=ds^{2}_{S}+dR^{2}$ where $ds_{S}$ is a choice of metric on $S\times 0$. Similarly, $H$ and all surfaces in \Sr\  homotopic to $H$ are assumed to be covered by coordinate charts $(U_{1}, s_{1}, R),\ldots,(U_{k}, s_{k}, R)$, where the $s_i$ are coordinates obtained by projecting onto $S\times 0$.  \\

%Proofs of standard Morse theory results, for example lemmas 2.7 and 2.8 of \cite{Milnor}, involve locally altering the function slightly. However, for this work there was a good reason for the given choice of Morse function, and so it is more convenient to think of the Morse function as being fixed while the surface is homotoped within $M$. This homotopy is also required to preserve embeddedness of $H$ whenever $H$ is embedded. 
Whenever $H$ is embedded in \Sr, it is a corollary of theorem \ref{minimalgenus} that there is a homotopy of $H$ that takes $H$ to an embedded surface in $S\times [a,b]$ with boundary contained in the level sets $S\times \left\{a\right\}$ and $S\times \left\{b\right\}$. In order to work with surfaces whose boundaries are not contained in level sets, the standard Morse theory has to be modified slightly.\\

A \textit{critical point} of a Morse function on $\partial H$ is a point where the restriction of the Morse function to the boundary has zero derivative, and a \textit{degenerate critical point} on the boundary is any critical point that is not an isolated local extremum. \\

%\begin{definition}[Hausdorff Topology (From \cite{japanesenames})]

%Given the topology on $M$ induced by the product metric, a sequence of closed sets $\left\{\Lambda_{n}\right\}$ in $M$ is said to converge to $\Lambda$ in the Hausdorff topology if the following two conditions are satisfied:
%\begin{enumerate}

%\item Any accumulation point of a sequence $\left\{x_{n}\in \Lambda_{n}\right\}$ belongs to $\Lambda$

%\item Every $x\in \Lambda$ is the limit point of a sequence $\left\{x_{n}\in \Lambda_{n}\right\}$

%\end{enumerate}
%\end{definition}

%\begin{definition}[$\mathcal{C}^2$ topology]
%Fix a set of coordinate charts, $(U_{1}, s_{1}, R)$ $,\ldots,$$(U_{k}, s_{k}, R)$ on $H$, where the $s_i$ are coordinates obtained by projecting into $S\times 0$. Let $\mathcal{C}^{2}(H)$ be the set of all $\mathcal{C}^2$ maps of $H$ into the real line. The $\mathcal{C}^2$ topology on $\mathcal{C}^{2}(H)$ is the topology with the neighbourhood basis given by sets of the form $\mathcal{N}^{2}(f, \epsilon)$, where $\mathcal{N}^{2}(f, \epsilon)$ consists of all functions $g$ in $\mathcal{C}^{2}(H)$ such that, within every coordinate chart

%\begin{equation*}
%\left|f(x)-g(x)\right|<\epsilon\text{ , } \left|Df(x)-Dg(x)\right|<\epsilon \text{ and } \left|D^{2}f(x)-D^{2}g(x)\right|<\epsilon
%\end{equation*}
%for all $x \in H$.
%\end{definition}

\begin{lemma}
\label{morsetheory}
Suppose $H$ is a compact embedded surface in $M$ with boundary consisting of the multicurves $m_1$ and $m_2$. Then there is an embedded surface in $S\times \mathbb{R}$, call it $H^{'}$, with the following properties:
\begin{enumerate}
\item $H^{'}$ is homotopic to $H$% according to definition \ref{definitionhomotopy}
\item The restriction, $m_{R}$, of the $\mathbb{R}$ coordinate to $H^{'}$ is a Morse function
\item No two critical points of the Morse function from 2 have the same value of the $\mathbb{R}$ coordinate. 

\end{enumerate}

 % and such that there \ldotsarel not arbitrarily many local extrema of $R$ on the boundary of $H$.
\end{lemma}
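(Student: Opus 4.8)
The plan is to obtain $H'$ from $H$ by a small perturbation, verifying the three conditions in order while keeping careful track of what happens on $\partial H$. First I would address condition (2) in the interior. The $\mathbb{R}$-coordinate restricts to a smooth function $m_R$ on $H$; the standard fact (Sard's theorem / genericity of Morse functions, e.g. via the height function on an embedded submanifold of Euclidean space adapted to the product structure) is that after an arbitrarily $C^\infty$-small isotopy of $H$ supported away from $\partial H$ we may assume all interior critical points of $m_R$ are nondegenerate. Since $H$ is embedded and the perturbation is small, embeddedness is preserved, and a small isotopy is in particular a homotopy, giving condition (1). The one subtlety is that $m_R$ has boundary behavior, so I would use the version of Morse theory for manifolds with boundary: I want $\partial H = m_1 \sqcup m_2$ to be handled correctly.

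The boundary is where the real work is, and I expect this to be the main obstacle. Each component of $\partial H$ is a smooth circle embedded in $S\times\mathbb{R}$; $m_R$ restricted to such a circle is a smooth function on $S^1$, which after a further small perturbation (isotoping $H$ near its boundary, dragging the boundary circles along — this is allowed since homotopies may move $\partial H$) can be assumed to be a Morse function on each boundary circle, i.e. finitely many nondegenerate critical points, each a local max or local min, so in particular none is a degenerate boundary critical point in the sense defined just before the lemma. Simultaneously I must ensure the two notions of critical point do not collide pathologically: I want no interior critical point to lie on $\partial H$ (automatic, they are in the interior) and I want $H'$ to meet each level set $S\times\{t\}$ near a boundary critical point in a controlled way — concretely, near a boundary critical point one can arrange, via a local model, that $H'$ looks like a half-disc $\{(x,y): y\ge 0\}$ with $m_R = \pm x^2 \mp y$ or $m_R=\pm(x^2+y^2)$ composed with the inclusion, so that the boundary critical points are genuine isolated extrema of $m_R|_{\partial H}$ and the surface is locally a graph over $S\times 0$. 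Because the coordinate charts on $H$ are of the form $(U_i,s_i,R)$, transversality of $H$ to the level foliation $\{S\times\{t\}\}$ away from critical points is exactly the statement that $m_R$ has no critical points there, so the chart hypothesis is compatible with this.

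Finally, condition (3): once $m_R$ is Morse with finitely many interior critical points and finitely many boundary critical points, their critical values form a finite set $\{v_1,\dots,v_N\}$ (counted with the points, not the values). If some values coincide, pick one of the two offending critical points, take a small disjoint neighborhood of it in $H'$ containing no other critical point, and compose $m_R$ there with a bump-function perturbation that shifts that critical value by an arbitrarily small $\epsilon$ without creating new critical points — equivalently, push $H'$ up or down slightly near that point in the $\mathbb{R}$-direction. Doing this finitely many times, and taking all $\epsilon$'s small enough to preserve embeddedness and the Morse property established above, separates all critical values. The result is the desired $H'$: it is homotopic to $H$ (a composition of small isotopies), $m_R$ is Morse in the boundary sense, and all critical values are distinct. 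The routine parts are the genericity/transversality arguments and the explicit local normal forms near boundary critical points; the only point requiring care is coordinating the interior perturbation, the boundary perturbation, and the value-separation so that each later step does not destroy the property achieved by an earlier one, which is handled by making every perturbation small relative to the (finite, discrete) data already fixed.
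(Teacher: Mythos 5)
Your route is essentially the paper's: perturb the embedded surface, not the function, so that $m_R$ becomes Morse; handle the boundary specially; then use standard Morse theory for the interior and for value separation. The gap is in the boundary step, which is where the real content lies. The paper chooses a Morse function $R_m$ on $\partial H$ close to $R$ in $\mathcal{C}^2$, sets $\eta = R_m - R$, and constructs an explicit cutoff $\phi(t,r)$ on a collar $N$ (with $\phi(t,0)=1$), then proves that the corrected height $R + \phi(t,r)\eta(t)$ has its derivatives in the $r$ and $t$ directions never simultaneously zero on a smaller collar $N_1$ --- so the full function has \emph{no} critical points in a neighborhood of $\partial H$, only isolated boundary extrema of its restriction to $\partial H$. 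You replace this computation with an unverified appeal to local normal forms, and one of your proposed models, $m_R = \pm(x^2+y^2)$ on the half-disc, is precisely the nongeneric configuration the paper's construction is designed to rule out: the full gradient vanishes at the boundary point, producing a critical point of $m_R$ sitting on $\partial H$. A correct argument must show the perturbation yields only $\pm x^2 \mp y$-type models near $\partial H$, with no interior-type critical point in the collar; that is what the paper's estimate on the partial derivatives achieves and what your proposal does not establish.

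Your ordering (interior first, boundary second) is also awkward, though probably repairable. Before the boundary is addressed, $\partial H$ may lie entirely in one or two level sets, in which case $m_R$ is degenerate along the whole boundary and badly behaved on any collar; a compactly-supported interior isotopy then accomplishes little, and the later boundary perturbation has to do all the work in the collar region anyway. The paper's boundary-first ordering avoids this: once $m_R$ is Morse on the collar $N_1$, the rest of $H$ is a compact piece whose boundary sits in $N_1$ where $m_R$ is already nice, so the classical Milnor density and value-separation arguments apply cleanly to that piece. Your closing remark that ``each later step does not destroy the property achieved by an earlier one'' because all perturbations are small does not resolve this: smallness in $\mathcal{C}^2$ prevents destroying nondegeneracy at an existing critical point, but it does not prevent the boundary perturbation from introducing new critical points in the transition region, which is exactly what the $\phi(t,r)$ estimate is for.
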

\begin{proof}
%Let $g$ be a hyperbolic metric on $M$. $H$ is compact, and since it has smallest possible genus, it follows from the Loop theorem that it is also incompressible. (The úsual theorems to show that the surface homotopic to $H$ with smallest area has constant curvature and is also embedded all seem to be proven for surfaces without boundary. By replacing ``minimal surface'' by ``surface with constant curvature and whose boundary curves are as short as possible given that the surface has constant curvature'' it seems that these results should also apply. Or maybe it would be easier to work with pleated surfaces with geodesic boundaries and to show that getting rid of self intersections doesn't increase the area\ldotsI haven't decided yet. All I need is bounded area!!) Let $H^{'}$ be an embedded surface homotopic to $H$ with finite area. 
It is a standard result, e.g. \cite{Milnor} Theorem 2.7, that on a compact manifold without boundary, the Morse functions form an open, dense (in the $\mathcal{C}^2$ topology) subset of the set of all smooth functions of the manifold into $\mathbb{R}$. This and similar standard results in Morse theory are proven by altering a given function by adding arbitrarily small functions with small derivatives. Similar arguments are used here; the main difference is that the coordinate $R$ is treated as fixed while the subset of \Sr\  to which $R$ is restricted is altered by a homotopy. \\

In the proof that the Morse functions form an open dense subset of $H$ into $\mathbb{R}$, first of all the existence of a surface $H^1$ homotopic to $H$ on which $\mathbb{R}$ is a Morse function on some neighbourhood of the boundary will be shown. The standard Morse theory arguments (e.g. theorem 2.7 of \cite{Milnor}) that assume empty boundary then apply to $H^1$, from which claims 2 and 3 of the lemma follow. It will then be shown that if the homotopies representing these alterations are sufficiently close to the identity, embeddedness is preserved.\\

%For a compact surface without boundary, the proof of Theorem 2.7 of \cite{Milnor} shows that there exists a function on the surface arbitrarily close to $R$ in the $\mathcal{C}^2$ topology that is a Morse function. This was proven by showing that it is possible to add a correction function $\eta$ to $R$, where $\eta$ is arbitrarily close to zero in the $\mathcal{C}^2$ topology and has support contained in a neighbourhood $\mathcal{D}$ of the degenerate critical points of $R$. The assumption that the surface is compact without boundary is only used to obtain a finite covering containing $D$. If the boundary components of $H$ each have a collar $c(\partial H)$ on which the restriction of the $R$ coordinate is a Morse function, the same argument as in the proof of Theorem 2.7 of \cite{Milnor} applied to $\mathcal{D}\cap (H\setminus c(\partial H))$ shows that there exists a function on the surface arbitrarily close to $R$ in the $\mathcal{C}^2$ topology that is a Morse function. Therefore, 

\textbf{Existence of a homotopy of $H$ that makes $m_{R}$ Morse on some neighbourhood of the boundary.} Let $N$ be a collar of the boundary of $H$; the existence of which is guaranteed by theorem 6.1, chapter 4 of \cite{Hirsch}. The boundary of $H$ is a compact manifold without boundary, so by theorem 2.7 of \cite{Milnor}, if the restriction of $R$ to $\partial H$ is not a Morse function, there is a Morse function $R_m$ on $\partial H$ arbitrarily close to $R$ in the $\mathcal{C}^2$ topology. \\

The collar $N$ is diffeomorphic to several copies of $S^{1}\times [0,\iota]$, which defines coordinates $(t,r)$ on each component of $N$, where $t$ is the parameter on $S^1$ and $r$ is defined on the interval $[o,\iota]$ and is equal to zero on the boundary curves $m_1$ and $m_2$. Let $\phi(t,r)$ be a smooth function on $N$, $0\leq \phi \leq 1$, $\phi(t,0)=1$, and let $\eta(t)$ be the function $R_{m}(t)-R$ on $\partial H$. It follows that $R+\phi(t,r)\eta(t)$ is a Morse function when restricted to $\partial H$, i.e. for $r=0$. \\

To construct a function without degenerate critical points on a neighbourhood of the boundary, it is enough to show that $\phi(t,r)$ can be chosen such that $\frac{d(R+\phi(t,r)\eta(t))}{dr}$ and $\frac{d(R+\phi(t,r)\eta(t))}{dt}$ are not simultaneously zero on a neighbourhood $N_1$ of $\partial H$ contained in $N$.\\

As a consequence of smoothness, $\frac{d(R+\phi(t,\kappa)\eta(t))}{dt}-\frac{d(R+\phi(t,0)\eta(t))}{dt}$ can be made arbitrarily small by choosing $\kappa$ sufficiently small. Since $R+\phi(t,r)\eta(t)$ is a Morse function on $\partial H$, when restricted to $\partial H$, $\frac{d(R+\phi(t,r)\eta(t))}{dt}$ is only zero at (isolated) critical points $p_{1}=(t_{1},0)$, $p_{2}=(t_{2},0),\ldots ,p_{n}=(t_{n},0)$. Therefore, $N_{1}\subset N$ can be chosen such that in $N_1$, $\frac{d(R+\phi(r,t)\eta(t))}{dt}$ can only pass through zero in a neighbourhood of the form $P_{i}:=(p_{i}-\epsilon, p_{i}+\epsilon)\times (0,\epsilon)$, for $i=1,2,\ldots ,n$. Inside each of the $P_i$, $\phi$ can be chosen such that $\frac{d(R+\phi(r,t)\eta(t))}{dr}$ is nonzero. This is possible because $\epsilon$ can be chosen such that $R$, $\eta$ and their derivatives do not vary much in the $\epsilon$ neighbourhoods. It follows that $N_1$ and $\phi$ can be chosen such that $R+\phi(r,t)\eta(t)$ is a Morse function on $N_1$. \\

\textbf{Preserving embeddedness} It remains to show that when the homotopy taking $H$ to $H^{'}$ is chosen to be sufficiently close to the identity, embeddedness is preserved. Let $H^1$ be a (possibly immersed) surface with boundary in \Sr\ that coincides with $H$ outside of $N$ and is given by the graph $(s, R+\phi(r,t)\eta(t))$ in the coordinate chart $(U_{i}, s_{i}, R)$ over $N$. Since $H$ is smoothly embedded in \Sr\  as a submanifold with boundary, it follows from theorems 6.1 and 6.3 of \cite{Hirsch} that $H$ has an embedded neighbourhood $\mathcal{E}(H)$ in \Sr. As $R_m$ approaches $R$ in the $\mathcal{C}^2$ topology on $\partial H$, $R+\phi(r,t)\eta(t)$ also approaches $R$ in the $\mathcal{C}^2$ topology on $N$. If $R_m$ was chosen to be sufficiently close to $R$ in the $\mathcal{C}^2$ topology, it follows that $H^1$ is contained in $\mathcal{E}(H)$ and is also embedded.

Setting $H^{'}=H^1$ for $R_m$ sufficiently close to $R$ therefore gives a surface with the properties claimed in the statement of the lemma.
\end{proof}

\section{Ordinary handles and bow tie Handles}

In the proof of theorem \ref{minimalgenus} it is necessary to keep track of intersection properties of projections of curves and arcs to $S\times 0$. For this reason it is helpful to distinguish between two distinct methods of attaching handles, depending on the way the handle projects into $S\times 0$. \\

Let $H^{a}_{b}:= H\cap (S\times (b,a) )$, $H_{b}:=H\cap (S\times  (b, \infty))$, $H^{a}:=H\cap((-\infty, a))$ and $H(a):=H\cap (S\times a)$.\\

Suppose the closure of $H^{a}$ contains two components, $F_1$ and $F_2$, that are subsurfaces of a connected component $F$ of $H^{a+\delta}$. In two dimensions, a handle can be thought of as an oriented rectangle $Q$ in $S\times \mathbb{R}$, whose boundary is a union of four arcs, each given an orientation as a subarc of the boundary of $Q$. To attach a handle to $F_{1}\cup F_{2}$, a pair of opposite sides of $Q$, $q^1$ and $q^2$, are glued along arcs on the boundary components of $F_1$ and $F_2$ respectively, in such a way that pairs of arcs with opposite orientation are glued together. In this way, an oriented surface $F$ is obtained, such that $F_1$ and $F_2$ are oriented as subsurfaces of $F$. \\
\subsection{Bow tie handles}
As a result of working with surfaces whose boundary is not contained in two level sets, it is necessary to consider an additional possibility; informally, an ordinary handle with a half twist in it. Note that a ``handle'' of this type does not come about from a critical point in the \textit{interior} of the surface. It is not hard to check that cutting open an ordinary handle and regluing it with a half twist can reduce the number of critical points in the interior of the surface. As illustrated in figure \ref{celldecomp}, a necessary condition for the existence of this phenomenon is the existence of critical points on the boundary of the surface.\\

Consider an example in which $F_1$ and $F_2$ project one to one onto subsurfaces of $S\times 0$ with opposite orientations. Then the handle $Q$ has to be embedded in $S\times \mathbb{R}$ with an odd number of half twists, otherwise the orientations of $F_1$ and $F_2$ can not be made to match up.\\

%\begin{enumerate}
%\item A rectangle $Q$ as in the previous paragraph contains a rectangle without twists; the representative of the %homotopy class of $H^{a+\delta}$ might be chosen such that this rectangle in $Q$ could be viewed as an ``ordinary'' %handle connecting two components of $H^a$, where the projection of $\partial H^{a}$ onto $S\times 0$ has %nonessential points of intersection.

%\item An ``ordinary'' handle, when given a half twist in one direction and a half twist in the other direction to cancel it out, %could be viewed as two handles with twists. 

%\end{enumerate}

%Choose a representative of the homotopy class of $H^{a}$ that avoids all nonessential points of intersection of its %boundary when projected into $S\times 0$. With such a representative of the homotopy class of $H^{a}$, a handle $Q$ %has twists if and only if there is no homotopy of $Q$ in $S\times \mathbb{R}$ relative to its boundary arcs $q_1$ and %$q_2$ in $H(a)$ such that $\partial\pi(Q)$ is embedded in $S\times 0$ and . \\

%\begin{figure}
%\begin{center}
%\def\svgwidth{13cm}
%\input{handles.pdf_tex}
%\caption{Examples of the two different types of handles.}
%\label{chonetwo2}
%\end{center}
%\end{figure}

\textbf{Definition of bow tie handle.} To sum up: suppose that for a given fixed $a$, $H^{a+\delta}$ is obtained from $H^{a}$ by attaching opposite sides of an oriented rectangle $Q$ to arcs on $\partial H^{a}$. This is done in such a way that the orientation on $Q$ matches that on $H^{a}$. Suppose also that the projection of $\partial H^{a}$ to $S\times 0$ only has essential intersections.
Let $h(t)$ be a homotopy of $H^{a+\delta}$ in $S\times \mathbb{R}$ such that
\begin{itemize}
\item $h(t)$ fixes $H^{a}$
\item There are only essential points of intersection between the projection to $S\times 0$ of the image of $Q$ under $h(1)$, and $\partial H^{a}$. 
\end{itemize}
If there does not exist a $h(t)$ such that $h(1)Q$ projects one to one onto its image in $S\times 0$, then it will be said that $H^{a+\delta}$ is obtained from $H^{a}$ by attaching a bow tie handle.

%\textbf{Definition of bow tie handle.} To sum up: suppose that for arbitrarily small $\delta$, $H^{a+\delta}$ is obtained from $H^{a}$ by attaching opposite sides of an oriented rectangle $Q$ to arcs on the boundary of $H^{a}$. This is done in such a way that the orientation on $Q$ matches that on $H^{a}$. Suppose also that the projection of $\partial H^{a}$ to $S\times 0$ only has essential intersections. Let $h(t)$ be a homotopy of $H^{a+\delta}$ in $S\times \mathbb{R}$ that fixes $H^{a}$. If for arbitrarily small $\delta$ there does not exist a $h(t)$ such that $h(1)(Q)$ projects one to one onto its image in $S\times 0$, then it will be said that $H^{a+\delta}$ is obtained from $H^{a}$ by attaching a bow tie handle.\\

%\begin{enumerate}
%\item the image of $Q$ under the homotopy $h(1)(H^{a+\delta})$ has a boundary that projects one to one into $S\times %0$, and
%\item this boundary only has essential points of intersection with the boundary of  $h(1)(H^{a})$
%\end{enumerate}
%$Q$ will be called a \textit{bow tie handle}. \\

\begin{figure}
%\centering
\includegraphics[width=9cm]{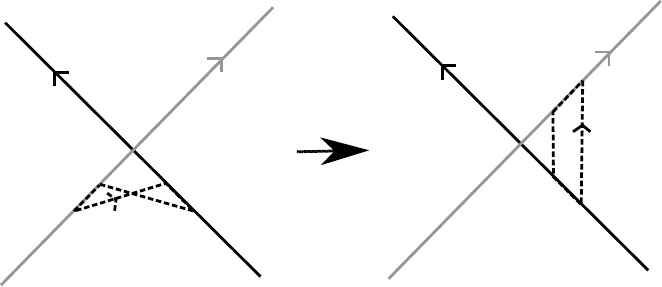}
\caption{A fake bow tie handle is shown on the left. A surface with this handle attached to the boundary is homotopic to the surface with an ordinary handle attached, as shown on the right. The long diagonal line segments are the one skeleton of the cell decomposition of the handle, as shown in figure \ref{celldecomp}}
\label{doublecrossingscumbag}
\end{figure}

If $Q$ is a bow tie handle that is attached to the surface $F$ and $F\cup Q$ is homotopic to the surface $F$ with an ordinary handle attached, $Q$ will be called a \textit{fake bow tie handle}. An example is given in figure \ref{doublecrossingscumbag}.\\

%Note that a bow tie handle does not arise from one single critical point of $m_{R}$ in the interior of $H$ as is the case for %an ordinary handle. In figure \ref{celldecomp} a cell decomposition of a bow tie handle is shown. In this example, the %``handle'' contains a union of two 2- cells associated with critical points on the boundary of the surface $H$. 

\begin{figure}
\begin{center}
\def\svgwidth{13cm}
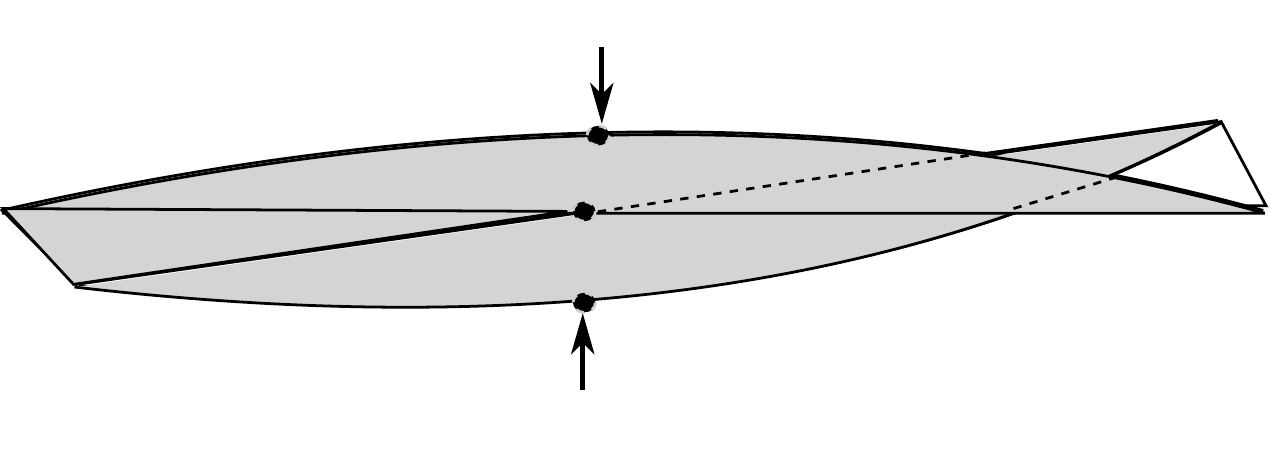
\caption{Cell decomposition of a bow tie handle. One side of the handle is shown in grey.}
\label{celldecomp}
\end{center}
\end{figure}
%\begin{figure}
%\centering
%\includegraphics[width=13cm]{celldecomp.jpg}
%\caption{Cell decomposition of a bow tie handle. One side of the handle is shown in green.}
%\label{celldecomp}
%\end{figure}

%Whenever $\delta$ is chosen small enough, the arc itself will not have self intersections when projected into $S$. A bow tie handle has the property that for arbitrarily small $\delta$, the boundary of every regular neighbourhood of the arc in $H^{a+\delta}$ has nonzero intersection number with the arc when projected into $S$. In other words, every regular neighbourhood of the handle is twisted around its core arc. As in the example, a bow tie handle with an odd number of half twists can connect two oriented surfaces with boundary that project to subsets of $S$; one oriented as a subset of $S$, the other as a subset of $-S$. If the handle has more than one half twist, the boundary of $H^{a+\delta}$ will project onto curves on $S$ with one or more points of intersection that can be removed by a homotopy in $S$. If the handle has an even number of half twists, the same surface (up to homotopy in $M$) could have been obtained by attaching an ordinary handle, although it is not clear if the surface would then be embedded.

\subsection{The types of handles and classification of level sets}
Since $H$ is embedded, whenever $a$ is not a critical value, $H(a):=H\cap (S\times a)$ is a union of curves and arcs that project one to one into $S\times 0$. Otherwise, choose the representative of the homotopy class of $H$ such that a Morse function $m_{R}$ is obtained and there is at most one critical point for any value of $m_R$. Then $\delta$ can be made small enough to ensure that $H^{a+\delta}$ is obtained from $H^{a-\delta}$ either by gluing a disc along a boundary curve, taking a disjoint union with a disc or by attaching a single (ordinary) handle. \\

\textbf{Gluing a 2-disc along a boundary component.} Whenever the boundary of the 2-disc is glued along a contractible curve of the boundary of $H^{a-\delta}$, $H(a+\delta):=H\cap (S\times \left\{a+\delta\right\})$ is a union of curves and arcs that project one to one into $S\times 0$. Similarly, whenever a 2-disc is attached to the boundary of $H_a$ along a boundary arc, as in figure \ref{celldecomp}, $H(a):=H\cap (S\times a)$ is also a union of curves and arcs that project one to one into $S\times 0$. The diagram in figure \ref{celldecomp} is a bit misleading here, since the existence of horizontal cells preclude the possibility of the hight function being a Morse function.\\

%there is a point $p$ in $H_{a-\delta}^{a}$ such that $H_{a-\delta}^{a}$ has a component $C$ consisting of three 2-cells connected at the vertex $p$, see for example figure \ref{celldecomp}. By the same arguments as in the proof of lemma \ref{morsetheory}, for sufficiently small $\delta$ there is a homotopy of $H$ that takes $C$ to $C^{'}$; a union of three 2-cells, each of which is either transverse or parallel to the gradient of $R$. The projection of $C^{'}$ therefore consists of two 2-cells attached at a vertex $\pi(p)$. 

\textbf{Attaching an (ordinary) handle.}
If $a$ is a critical value and $H^{a+\delta}$ is obtained from $H^{a}$ by attaching a handle, $H(a)$ is a one dimensional cell complex. There is a point $p$ on $\partial H^{a}$ such that $p$ has a neighbourhood in the closure of $H^{a}$ consisting of two 2-cells that meet at the vertex $p$. By definition, $p$ has a neighbourhood in $H$ that projects one to one onto a neighbourhood of $\pi(p)$, so the point $p$ in $H(a)$ is a point at which two arcs in $H(a)$ touch but do not cross over.\\

\subsection{Proof of theorem in the absence of bow tie handles}
\begin{proof}[Proof of Theorem \ref{minimalgenus}]

Let $c$ be a simple curve in the intersection of $H$ with a level set of $\mathbb{R}$, $S\times \left\{a\right\}$. An \textit{up collar} of $c$ is an annular subsurface of $H$, $c\times \left[0,\epsilon\right]$, such that $c\times \left(0,\epsilon\right]\subset S\times \left(a,\infty\right)$ and $c\times \left[0,\epsilon\right]$ is not contained in a component of $H\cap S\times \left[a, \infty\right)$ consisting of an annulus with core curve $c$ or a punctured sphere with boundary curves either contractible or homotopic to $c$.
A \textit{down collar} is defined analogously.\\

%\end{definition}

%\begin{figure}
%\centering
%\includegraphics[width=13cm]{updown.jpg}
%\caption{From left to right: An ``up'' collar, a ``down'' collar, and a collar that is neither up nor down. The curve $c_0$ is drawn in green.}
%\label{updown}
%\end{figure}

Let $c_0$ be a curve in the multicurve $m_1$ on $\partial H$. Since $c_0$ is a simple curve, it is possible to assume without loss of generality that the zero of $R$ was chosen to contain the boundary curve $c_0$ of $H$.

\begin{lemma}[Existence of an up/down collar]
Assume $H$ has no bow tie handles. There exists an embedded representative of the homotopy class of $H$ such that, by mapping $R$ to $-R$ if necessary, the boundary curve $c_0$ has an up collar.  
\label{collar}
\end{lemma}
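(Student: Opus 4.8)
## Proof Proposal for Lemma~\ref{collar}

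\textbf{Overall strategy.} The plan is to start with the Morse representative of $H$ furnished by Lemma~\ref{morsetheory}, look at what happens to the boundary curve $c_0\subset S\times\{0\}$ as the $\mathbb{R}$-coordinate increases, and argue that the only obstruction to $c_0$ bounding an up collar is that a whole connected piece of $H$ lying above $S\times\{0\}$ near $c_0$ is a ``trivial cap'' — an annulus with core $c_0$ or a punctured sphere whose boundary curves are contractible or parallel to $c_0$. In that degenerate case I will flip $R\mapsto -R$ and show that $c_0$ must then have a down collar (which, after the flip, becomes an up collar), because $c_0$ cannot be a trivial cap on \emph{both} sides without $H$ failing to be incompressible or connected in the way the hypotheses require.

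\textbf{Step 1: set up the local picture near $c_0$.} Apply Lemma~\ref{morsetheory} to get an embedded representative $H'$ on which $m_R$ is Morse with distinct critical values, and with $c_0\subset S\times\{0\}=m_R^{-1}(0)$. Since $c_0$ is a boundary curve and (by the convention fixed just before the lemma) $0$ was chosen so that $m_R$ restricted to $c_0$ is identically zero, a collar neighbourhood of $c_0$ in $H'$ is coordinatised by $(t,r)$ as in the proof of Lemma~\ref{morsetheory}; after a further arbitrarily small perturbation supported away from the other boundary components I may assume $m_R$ has no interior critical points in this collar and that $m_R$ is not identically zero on any one-sided neighbourhood of $c_0$ unless the entire adjacent component forces it. The curve $c_0$ is then either a local maximum, a local minimum, or neither for the function ``signed height of $H'$ near $c_0$''.

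\textbf{Step 2: produce the collar in the generic case.} If $H'$ enters the region $S\times(0,\infty)$ near some point of $c_0$, then a standard flow argument — push along the gradient-like vector field for $m_R$ starting from $c_0$ — produces an embedded annulus $c_0\times[0,\epsilon]$ with $c_0\times(0,\epsilon]\subset S\times(0,\infty)$. I must then check this annulus is not contained in a bad component of $H'\cap(S\times[0,\infty))$: if it were, that component is an annulus or punctured sphere of the excluded type, and then in fact $H'$ does \emph{not} go above $0$ near $c_0$ in the relevant sense, contradicting the case assumption. Here the hypothesis ``$H$ has no bow tie handles'' is used to rule out the configuration where $c_0$'s neighbourhood doubles back on itself in a half-twisted way, which is exactly what would prevent the pushed-off annulus from being embedded and genuinely collar-like.

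\textbf{Step 3: the degenerate case and the $R\mapsto -R$ flip.} Suppose instead that near $c_0$ the surface lies entirely in $S\times(-\infty,0]$, i.e. $c_0$ is a local maximum of the height function, and moreover the component of $H'\cap(S\times(-\infty,0])$ containing a collar of $c_0$ is of the excluded type (annulus with core $c_0$, or punctured sphere with boundary curves contractible or homotopic to $c_0$). I claim the down-side component of $H'$ near $c_0$ is then \emph{not} also of the excluded type: if it were, then a neighbourhood of $c_0$ in $H'$ would be capped off on both sides by trivial pieces, and gluing these together would exhibit either a compressing disc for $c_0$ (if a punctured-sphere boundary is contractible) or would split $H'$ off a closed component / force $c_0$ to be an inessential boundary curve — contradicting that $H$ is connected, incompressible, and has boundary exactly $m_2-m_1$ with $m_1,m_2$ essential multicurves (no curves bounding discs, by the standing convention on multicurves). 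Hence after replacing $R$ by $-R$, the down collar of $c_0$ becomes an up collar of the required kind, and we are done.

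\textbf{Main obstacle.} The delicate point is Step~3: carefully verifying that $c_0$ cannot be a trivial cap on both sides simultaneously, and extracting exactly which hypothesis (incompressibility, connectedness, or essentiality of $m_1,m_2$) is violated in each sub-configuration. A secondary subtlety is making the perturbation in Step~1 compatible with the Morse condition from Lemma~\ref{morsetheory} without reintroducing degenerate critical points on the boundary; I expect this to be routine but it needs to be stated. The no-bow-tie-handles hypothesis enters only to keep the pushed-off annulus embedded, and I would flag explicitly where it is invoked.
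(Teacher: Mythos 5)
There is a genuine gap in your proposal: you correctly identify the three cases (collar goes up, collar goes down, excluded component), but you never give the actual argument that the no-bow-tie-handles hypothesis forces the collar of $c_0$ to exit consistently to one side of $S\times\{0\}$. In Step~2 you say the hypothesis ``rules out the configuration where $c_0$'s neighbourhood doubles back on itself in a half-twisted way,'' but that sentence names the conclusion rather than proving it. The paper's proof is essentially about exactly this point: it observes that the only remaining obstruction (once trivial caps are disposed of) is that the second boundary curve of the collar annulus $A$ could have nonzero winding number around $c_0$ in $S\times\mathbb{R}$, and it rules this out by tracking the handedness of the frame $(\dot{c}_0(t),\dot{r}(t),n)$ along $c_0$, where $\dot{r}$ is tangent to $A$ and $n$ is normal to the level set: a change of handedness forces $A$ to be built from cells whose projections to $S\times 0$ carry both orientations, i.e.\ forces a bow tie handle in $A$. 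Without some version of this frame or winding-number argument, the lemma has no content --- the collar $c_0\times[0,\epsilon]$ exists automatically for any smooth boundary curve; the issue is whether $c_0\times(0,\epsilon]$ lies entirely in $S\times(0,\infty)$ (or entirely in $S\times(-\infty,0)$ after flipping), and that is precisely where the bow-tie hypothesis must be invoked concretely.

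Your Step~3 is also misdirected. Since $c_0$ is a boundary curve of $H$, it has only one side in $H$, so the picture of ``$c_0$ capped off by trivial pieces on both sides'' does not arise; there is nothing to glue. The flip $R\mapsto-R$ in the lemma statement is used only when the collar $A$ consistently exits downward, so that the resulting down collar becomes an up collar after the flip. The excluded-component case (the up side of $c_0$ is an annulus with core $c_0$ or a trivial punctured sphere) is handled differently in the paper: one cuts an annulus off that boundary component of $H$ (keeping the homotopy class, and noting that this may alter the Morse function but leaves the new boundary in the same level set), and repeats if necessary; since there are finitely many critical points this terminates. Incompressibility enters only through the observation, which the paper makes parenthetically, that a curve on $H$ contractible in $S\times\mathbb{R}$ is also contractible in $H$, which is what allows one to identify the trivial punctured-sphere pieces. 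You should replace Step~3 by this cutting argument, and replace the hand-waving in Step~2 by the explicit frame-handedness computation.
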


\begin{proof}
$H$ is orientable, so $c_0$ could not be the core curve of a M\"obius band. Therefore $c_0$ is a boundary curve of an annulus $A$ contained in $H$.\\

By cutting an annulus off one boundary component of $H$ if necessary, it is possible to assume without loss of generality that $c_0$ is not on the boundary of a component of $H\cap (S\times \left[a, \infty\right))$ consisting of an annulus with core curve $c_0$ or a punctured sphere with boundary curves either contractible or homotopic to $c_0$. (Since $H$ is assumed to be incompressible, a curve on $H$ that is contractible in $S\times \mathbb{R}$ is also contractible in $H$.)\\

Note that since the zero of the $R$ coordinate is being implicitly assumed to contain the boundary curve $c_0$, cutting an annulus with core curve $c_0$ off the boundary could give rise to a new Morse function, $m_{R}^{'}$. The handle decomposition of the surface might contain bow tie handles with respect to a new Morse function. However, in this case, the new boundary curves are in the same level set as the previous boundary curves, so this problem does not need to be considered here.\\

All that could go wrong is therefore that the second boundary curve of $A$ might have nonzero winding number in $S\times \mathbb{R}$ around $c_0$. Let $\dot{c}_{0}(t)$ be a nonvanishing tangent vector to the curve $c_0$, let $\dot{r}(t)$ be a nonvanishing vector field along $c_0$ tangent to $A$ and linearly independent to $\dot{c}_{0}(t)$, and let $n$ be a normal vector to $S\times \left\{a\right\}$. If the second boundary curve of $A$ has nonzero winding number around $c_0$, the handedness of $(\dot{c}_{0}(t), \dot{r}(t), n)$ has to change when moving around $c_0$. Therefore, $A$ has to be constructed by gluing together cells, some of which project to cells in $S\times 0$ with the induced subsurface orientation, and some of which project to cells in $S\times 0$ with the opposite of the induced subsurface orientation i.e. $A$ has to contain bow tie handles.

\end{proof}

\begin{remark}
What the previous lemma does not show is that there is an embedded representative of the homotopy class containing $H$ whose intersection with $S\times 0$ contains the multicurve $m_1$ such that every curve in $m_1$ simultaneously has an up collar. 
\label{remark}
\end{remark}

\subsection{Choosing the zero of $\mathbb{R}$} It is very convenient to choose $S\times 0$ such that the boundary curve $c_0$ is contained in $S\times 0$, however this choice results in a boundary curve $c_0$ consisting of degenerate critical points. This detail is resolved in the next lemma.

\begin{lemma}
Suppose the zero of $R$ was chosen such that the boundary curve $c_0$ of $H$ is contained in $S\times 0$. Then there exists an embedded surface $H_{\text{wiggled}}$ homotopic to $H$ with the following properties:
\begin{enumerate}
\item $H_{\text{wiggled}}$ is arbitrarily close to $H$ in the Hausdorff topology
\item the restriction of $R$ to $H_{\text{wiggled}}$ is a Morse function, and
\item there exists a noncritical value $r$ of $R$ such that $S\times \left\{r\right\}$ intersects $H_{\text{wiggled}}$ along the collar of $c_0$ whose existence was shown in the previous lemma such that $H_{\text{wiggled}}(r)$ contains a curve homotopic to $c_0$.
\end{enumerate}
\label{boring}
\end{lemma}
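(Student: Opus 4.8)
I would cure the degeneracy near $c_0$ by hand and then invoke the general-position machinery of Lemma~\ref{morsetheory} for the rest of the surface. Start from the embedded representative provided by Lemma~\ref{collar}, which carries an up collar $A\cong c_0\times[0,\epsilon]$ of $c_0$; in collar coordinates $(t,s)\in S^1\times[0,\epsilon]$ write $R(t,s)$ for the $\RR$-coordinate along $A$, so that $R(t,0)=0$ and $R(t,s)>0$ for $s>0$. The only reason $m_R=R|_H$ fails to be Morse near $c_0$ is that the circle $c_0=c_0\times\{0\}$ is an entire circle of degenerate critical points of value $0$; so the first move is to replace a thin sub-collar of $A$ by a ``straightened'' model annulus.

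\textbf{Straightening a thin collar.} Fix a small $\mu>0$. Since $R|_A$ is continuous and vanishes on $c_0$, choose $\eta\in(0,\epsilon)$ with $A_\eta:=c_0\times[0,\eta]$ carried into $S\times(-\mu,\mu)$; the up-collar property forces $R>0$ on $c_0\times\{\eta\}$, so one may then pick $\delta>0$ with $\delta<\mu$ and $3\delta<\min_{t}R(t,\eta)$. Now homotope $H$, with support in a neighbourhood of $A_\eta$ inside an embedded tube $\mathcal{E}(H)$ (which exists by Theorems~6.1 and~6.3 of~\cite{Hirsch}, as in the proof of Lemma~\ref{morsetheory}), replacing $A_\eta$ by an embedded annulus $A'_\eta$ so that the new surface is a small normal graph over $H$ and, in suitable collar coordinates, $R|_{A'_\eta}=\delta\cos(2\pi t)+\rho(t,s)$ with $\rho$ smooth, $\rho(t,0)=0$, $\partial_s\rho>0$, and $\rho(t,\eta)$ matching $H$ along $c_0\times\{\eta\}$ (possible since $R(t,\eta)-\delta\cos(2\pi t)>2\delta>0$ there). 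Being a normal graph over $H$, the result is automatically embedded, and since both $A_\eta$ and $A'_\eta$ lie in the slab $S\times(-\mu,\mu)$ the modification is $O(\mu)$ in the Hausdorff metric.

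\textbf{The regular level and the global clean-up.} Set $r:=\tfrac{3}{2}\delta$. As $|\delta\cos(2\pi t)|\le\delta<r<3\delta<R(t,\eta)$ and $\partial_s\rho>0$, the slice $S\times\{r\}$ meets $A'_\eta$ transversally in the single circle $\{\rho(t,s)=r-\delta\cos(2\pi t)\}$, a graph over $S^1$, hence isotopic in $A'_\eta$ --- and so in the surface --- to the core of $A'_\eta$, i.e.\ to $c_0$; and it sits inside the (straightened copy of the) up collar of Lemma~\ref{collar}, which is exactly what (3) requires. On $A'_\eta$ the only critical points of $R$ are the nondegenerate maximum and minimum of $\delta\cos(2\pi t)$ on the boundary circle, so $R$ is already Morse on a neighbourhood of the new $c_0$. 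Finally, apply the standard perturbation argument of Lemma~\ref{morsetheory} (Theorem~2.7 of~\cite{Milnor}) to the resulting surface, with the perturbations supported away from $A'_\eta$, to make $R$ Morse everywhere and, after one more arbitrarily small perturbation, with distinct critical values; keeping all perturbations small enough to stay inside $\mathcal{E}(H)$ and not to disturb the regularity of $r$ gives $H_{\text{wiggled}}$: it is embedded and homotopic to $H$, its restriction of $R$ is Morse (giving (2)), the slice $S\times\{r\}$ meets it along the up collar in a curve isotopic to $c_0$ (giving (3)), and it differs from $H$ by homotopies of total Hausdorff displacement $O(\mu)$ plus arbitrarily small perturbations (giving (1)).

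\textbf{Main obstacle.} Everything apart from the collar straightening is soft general position, essentially verbatim from Lemma~\ref{morsetheory}. The delicate point is the construction of $A'_\eta$: one must check that $A_\eta$ can be replaced by the monotone model $A'_\eta$ so that the total surface remains a graph over the normal bundle of $H$, and hence embedded. This is where the choices of $\eta$ (putting $A_\eta$ in a thin slab) and $\delta\ll\mu$ are used; it is convenient to first perform a preliminary small homotopy making $H$ project injectively to $S\times 0$ on a neighbourhood of $\partial H$, after which $A_\eta$ is close to a genuine annulus over $\pi(c_0)$ and the straightening is manifestly a small normal graph.
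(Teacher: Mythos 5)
Your proof is correct, and reaches the same destination as the paper's, but the route is noticeably more constructive. The paper's own proof is two sentences: invoke Lemma~\ref{morsetheory} as a black box to obtain $H_{\text{wiggled}}$, and then assert that, by Hausdorff closeness, the up collar of $c_0$ furnished by Lemma~\ref{collar} persists and still meets some small regular level set $S\times\{r\}$ in a circle homotopic to $c_0$. You instead carry out the boundary part of the perturbation by hand, replacing a thin sub-collar $A_\eta\subset S\times(-\mu,\mu)$ by an explicit monotone model $A'_\eta$ with $R=\delta\cos(2\pi t)+\rho(t,s)$, $\partial_s\rho>0$, and $\delta\ll\mu$, and only then apply the interior perturbation of Lemma~\ref{morsetheory} supported away from $A'_\eta$. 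The advantage of your version is that claim~(3) becomes a computation --- $r=\tfrac32\delta$ is visibly regular and the slice is a graph over $S^1$ --- rather than a ``persistence by closeness'' appeal that the paper leaves undocumented (in particular the paper does not address why the slice is a \emph{single} circle, which your $\partial_s\rho>0$ ensures). The cost is the extra care you flag in the last paragraph: one must check that swapping $A_\eta$ for $A'_\eta$ stays a normal graph over $H$ inside the tubular neighbourhood $\mathcal{E}(H)$, which in turn may require a preliminary small homotopy so that $H$ projects injectively to $S\times 0$ near $\partial H$. That preliminary step is plausible but is itself of the same soft ``small-perturbation'' character as the paper's argument, so the two proofs ultimately rest on the same general-position foundation; yours simply localises where that softness is used.
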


\begin{proof}

It follows from lemma \ref{morsetheory} that there is a surface $H_{\text{wiggled}}$ homotopic to $H$ and arbitrarily close to $H$ in the Hausdorff topology to which the restriction of the $R$ coordinate is a Morse function. Whenever $H_{\text{wiggled}}$ is sufficiently close to $H$ in the Hausdorff topology, the boundary curve $c_0$ has to have a collar in $H_{\text{wiggled}}$ such that the intersection of $S\times \left\{r\right\}$ with this collar contains a curve homotopic in $H_{\text{wiggled}}$ to the boundary curve $c_0$, for some small, noncritical value $r$ of $R$.  
\end{proof}

It can therefore be assumed without loss of generality that the zero of the $R$ coordinate and the embedded representative of the homotopy class of $H$ are chosen such that the restriction of $R$ to $H$ is a Morse function, $H(0)$ contains a curve homotopic to the boundary curve $c_0$ and no two critical points occur at the same value of $R$. With this choice of the zero of the $R$ coordinate, the boundary curves $m_2$ and $m_{1}\setminus c_{0}$ might intersect $S\times 0$ in a complicated way.

\subsection{Reordering Critical Points}\label{reorder}
There are standard results in Morse theory that describe how to modify a Morse function so as to change the order in which handles are attached. For this proof, it is also necessary to restrict to Morse functions that can be realised as the projection to the $\mathbb{R}$ coordinate of an embedded surface. Let $c_{0}^{'}$ be a curve on $H$ homotopic to the boundary curve $c_0$. Since $c_{0}^{'}$ has self intersection number 0, it is possible to find a new metric on $S\times \mathbb{R}$ so that $c_{0}^{'}$ is in the 0 level set of the $\mathbb{R}$ coordinate. This new metric will induce a new Morse function $m_{R^{'}}$. \\

Let $p$ be a critical point of $m_{R}$ corresponding to, for example, an ordinary handle. If $c_{0}^{'}$ does not pass through $p$, let $N$ be a small neighbourhood of $p$ disjoint from $c_{0}^{'}$, and assume that the new metric on $S\times \mathbb{R}$ was obtained without altering the metric inside $N$. Then if $c_{0}^{'}$ passes sufficiently close to $p$, the Morse function $m_{R^{'}}$ has the property that the critical point $p$ gives rise to the first surgery performed on the annulus with core curve homotopic to $c_0$. In this way, it is possible to assume without loss of generality that the first surgery performed on the annulus with core curve homotopic to $c_0$ involves attaching an ordinary handle.\\

When modifying the Morse function as described in the previous paragraph, it might happen that the assumption of no bow tie handles breaks down. We do not worry about this here, and show how to deal with bow tie handles in a later section.\\

\section{Handle decomposition}\label{Handledecomp} It is finally possible to start the handle decomposition of $H$. This handle decomposition will be constructed such that each of the pants is homotopic to a pant in $S\times 0$. A surface homotopic to $H$ is obtained by gluing the pants together, in a manner similar to the construction in section \ref{surfaceconstruction}. This pant decomposition is then used to obtain a convenient Morse function, from which a path in the curve complex $\mathcal{HC}(S, [m_{1}])$ is obtained. \\

If $a$ is so small that there are no critical points of $R$ in the interval $\left[0,a\right]$, then $H^{a}_0$ is a union of annuli whose core curves project onto a multicurve in $S$ and perhaps some simply connected components. \\

Suppose now that $a$ is large enough to ensure that there is only one critical value, $b$, in the interval $\left[0,a\right]$. If $H^{a}_0$ contains a simply connected component that intersects some $S\times (a-\delta)$ along an arc or a contractible curve, and if this component was not in $H^{x}_0$ for $x<b$, then the critical point has not changed the topology of the component of $H^{a}_0$ with $c_0$ on its boundary. \\

By the argument in subsection \ref{reorder}, it can be assumed without loss of generality that there is not a local extremum of the Morse function at height $b$ and that $b$ is not a critical level that gives rise to a surgery that connects a contractible subsurface to the annulus with core curve homotopic to $c_0$.\\

%\begin{figure}
%\begin{center}
%\def\svgwidth{13cm}
%\input{chonetwo4.pdf_tex}
%\caption{A local minimum on the boundary at $R=b$ for $b<a$.}
%\label{chonetwo4}
%\end{center}
%\end{figure}

%If $a$ is large enough for there to be a saddle point $p$ in the interior of $H^{a}_0$, this saddle point could cancel out a local minimum. A critical point of this type also only changes the representatives of the homotopy classes of the curves on the boundary of $H_{0}^{a}$ and$/$or the number of contractible components. \\

%\begin{figure}
%\centering
%\includegraphics[width=13cm]{chonetwo5.jpg}
%\caption{A saddle point cancelling out a local minimum.}
%\label{chonetwo5}
%\end{figure}

%If $a$ is now chosen such that in the interval $(0,a]$ there is either:
%\begin{enumerate}

%\item a local maximum (either in the interior or on the boundary) or

%\item a saddle point,

%\end{enumerate}
%then the topology of $H^{x}_0$ changes as $x$ moves through the critical value $b$. 
The critical point at height $b$ is therefore a saddle point. In particular, $H^{a}_0$ is obtained from $H^{b}_0$ (a disjoint union of contractible components and annuli whose core curves project onto a multicurve in $S$) by attaching a handle. By construction, it follows that the endpoints of the handle are either both on the same annulus or on two different annuli. Whenever both of the endpoints of the handle are on the boundary of the annulus with core curve $c_0$, $H^{a}_0$ contains a pair of pants with boundary curves $c_0$ and $\eta \cup \beta$. \\

\textbf{Remark.} There is another alternative here, namely that the handle has one endpoint on each boundary component of the annulus with core curve $c_0$. However, this does not happen here, because the handle is attached to the boundary component $H(b)$ which only contains one curve homotopic to $c_0$.\\

If neither $\eta$ nor $\beta$ is contractible, $\eta \cup \beta$ is homotopic to a multicurve because it is a submanifold of the intersection of the embedded surface $H$ with $S\times \left\{b\right\}$.\\

If none of the curves are contractible, $c_{0}\cup \eta \cup \beta$ is also a multicurve, because $\eta \cup \beta$ is constructed by attaching a single handle to $c_0$, where the handle is a subsurface of $S\times \left\{a\right\}$ without self intersections that meets the projection of $c_0$ onto $S\times \left\{a\right\}$ only at its endpoints. It follows that the corresponding pair of pants is homotopic to a pair of pants in $S\times 0$. \\

Let $\delta_{1}$ be the multicurve $\eta \cup \beta$, unless one of $\eta$ or $\beta$ is contractible. If one of $\eta$ or $\beta$ is contractible, $\beta$ for example, it follows from the assumption of incompressibility of $H$ that $\beta$ bounds a disc $B$ in $H$. If the disc $B^{'}$ in $S\times \left\{a\right\}$ bounded by $\beta$ is disjoint from $H$, since $S\times \mathbb{R}$ is aspherical, there exists a homotopy of $H$ that fixes $H$ outside of the closure of $B$ and takes $B$ to $B^{'}$. If $B^{'}$ intersects $H$, it intersects $H$ in a union of disjoint circles, and we perform a series of homotopies to remove the intersections, starting with an innermost circle. Although these homotopies do not result in a surface on which the $\mathbb{R}$ coordinate defines a Morse function, in practice this is not a problem due to the fact that Morse functions are dense.\\

%contains an annulus with boundary curves $c_{0}$ in $S\times 0$ and $\eta$ in $S\times \left\{a\right\}$ that intersects $S\times i$ for some values of $i$ in a disconnected set. \\

%\begin{figure}
%\centering
%\includegraphics[width=13cm]{chonetwo6.jpg}
%\caption{An annulus in $H$ that intersects $S\times \left\{a\right\}$ in a disconnected set.}
%\label{chonetwo6}
%\end{figure}

Similarly if one of the endpoints of the handle is on the boundary of the annulus with core curve $c_0$ and the other is on the boundary of another annulus whose core curve $\eta$ is in the multicurve $m_1$. In this case $\beta$ is the curve obtained by connecting the annuli with core curves $c_0$ and $\eta$ by a handle, and $c_{0}\cup \eta \cup \beta$ is a multicurve for the same reason as in the previous case. In this case, let $\delta_1$ be the multicurve $\eta \cup \beta$ unless one of $\eta$ or $\beta$ is contractible. \\

If the handle doesn't have an endpoint on the annulus with core curve $c_0$, then the intersection of $H_0$ with $S\times \left\{a\right\}$ will be a union of arcs plus a new multicurve, $m_{1*}$, containing $c_0$. That $m_{1*}$ is a multicurve follows from the same argument as before.\\

Following lemma \ref{collar} it can be assumed without loss of generality that the component of $H_0$ with the boundary curve $c_0$ of $H$ does not consist of an annulus with core curve $c_0$ or a punctured sphere whose boundary curves are either contractible or homotopic to $c_0$. Therefore, if $a$ is increased enough, there will be a critical point of $m_R$ on the component of $H^{a}_0$ with $c_0$ on its boundary. Since there are only finitely many critical points, eventually the desired pair of pants is obtained, and $\delta_1$ is then defined to be $\eta \cup \beta$.\\

To construct $\delta_2$, cut the pair of pants with boundary $c_{0}\cup \delta_1$ off $H$ to obtain an embedded surface $H_1$ whose boundary contains the curves $\delta_1$. Since $\delta_1$ is a multicurve, the previous argument can be applied with a curve from $\delta_1$ in place of $c_0$ and $H_1$ in place of $H$. In this way, a union of multicurves, $\delta_{1}, \delta_{2}, \ldots, \delta_n$ that decompose $H$ into a union of subsurfaces is obtained, where each of these subsurfaces are homotopic to subsurfaces of $S\times 0$. \\

\begin{figure}
\begin{center}
\def\svgwidth{13cm}
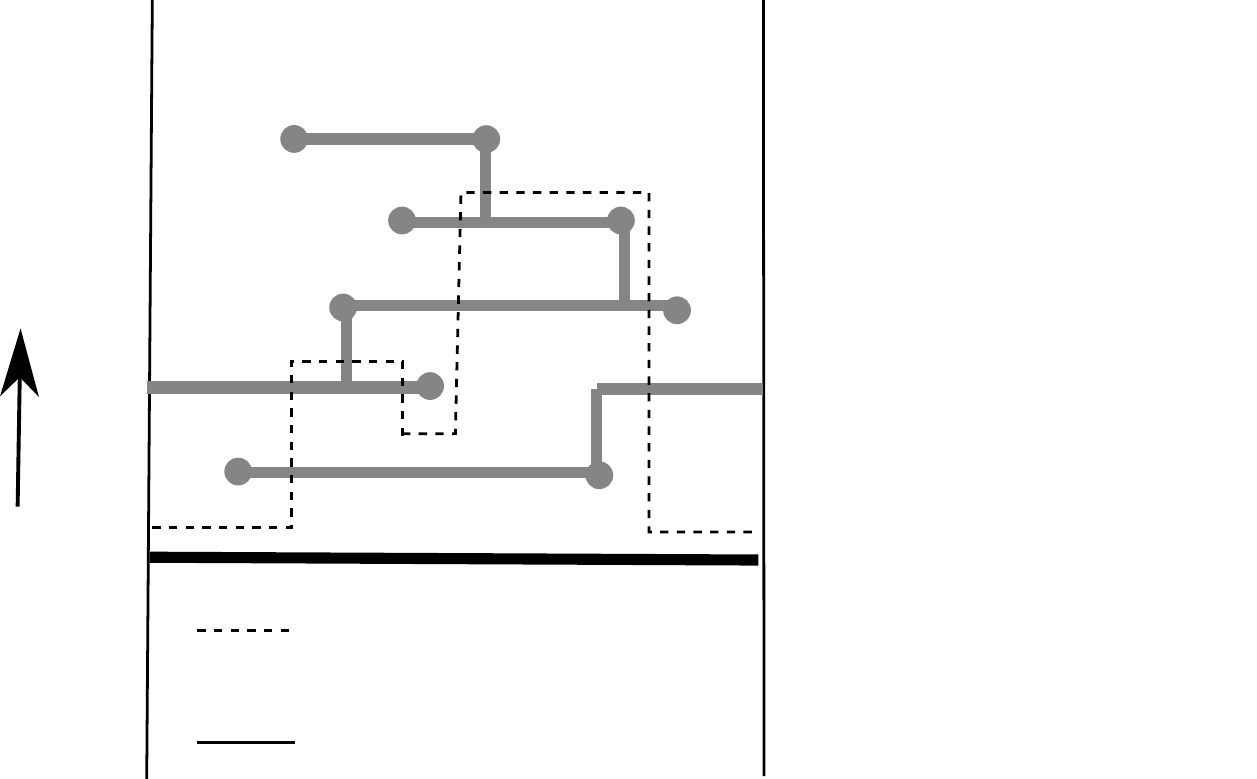
\caption{A choice of the zero of $R$ satisying remark \ref{remark} is represented by the dotted line. Let $m_{1}=c_{0}\cup c_{1}\cup c_{3}\ldots$. Horizontal lines represent subsurfaces of level sets, vertical lines represent annuli or unions of annuli. A dot represents a curve or multicurve.}
\label{remarkdiagram}
\end{center}
\end{figure}
%\begin{figure}
%\centering
%\includegraphics[width=13cm]{remark.jpg}
%\caption{A choice of the zero of $R$ satisying remark \ref{remark} drawn in green. Let $m_{1}=c_{0}\cup c_{1}\cup c_{3}\ldots$. Horizontal lines represent subsurfaces of level sets, vertical lines represent annuli or unions of annuli. A dot represents a curve or multicurve}
%\label{remarkdiagram}
%\end{figure}

The proof is not finished yet, because $m_{1}, \delta_{1}, \delta_{2},\ldots\delta_{n}, m_2$ is not in general a path in $\mathcal{HC}(S, \left[m_{1}\right])$. Although for example, $\delta_1$ does not intersect $c_0$, it might intersect other curves in $m_1$. It is enough to show that all the curves in $m_1$ simultaneously have an up collar, because then the pant decomposition constructed as described in the previous paragraphs defines a path $m_{1}, \gamma_{1}, \ldots, \gamma_{j}, m_2$ in $HC(S, \left[m_{1}\right])$, where $\gamma_{1}:=(m_{1}\cup\eta\cup\beta)\setminus c_0$ if the handle has both endpoints on the annulus with core curve $c_0$ or $\gamma_{1}:=(m_{1}\cup\beta)\setminus (c_{0}\cup \eta)$ otherwise.\\

The pant decomposition given by the $\left\{\delta_{i}\right\}$ shows that $H$ is homotopic to a surface obtained by gluing together pants along boundary curves, where each of these pants is homotopic to an incompressible subsurface of $S\times 0$, similar to the construction in section 2. This is used to show that the zero of $R$ can be chosen such that all curves in $m_1$ simultaneously have an up collar. How to construct such a zero for the $R$ coordinate is explained schematically in figure \ref{remarkdiagram}. This completes the proof of theorem \ref{minimalgenus} in the absence of bow tie handles.\\

%Alternatively, an approach similar to the one taken in figure \ref{remarkdiagram} could be used to obtain a modified metric on $S\times \mathbb{R}$, such that the boundary multicurves $m_1$ are contained in the level set $S\times \left\{a_{1}\right\}$ and the boundary multicurves $m_2$ are contained in the level set $S\times \left\{a_{2}\right\}$. Theorem 1.1 would then follow from the known case.

% However, as illustrated in figure \ref{remarkdiagram}, the decomposition of $H$ by the multicurves $\delta_{1}, \delta_{2}, \ldots\delta_n$ makes it possible to find an explicit choice of the zero of $R$ such that remark \ref{remark} is satisfied. With this choice of zero of $R$, all the curves in $m_1$ have up collars, and $H(0)$ contains $m_1$. The previous argument then gives $\gamma_{1}:=(m_{1}\cup\eta\cup\beta)\setminus c_0$ if the handle has both endpoints on the annulus with core curve $c_0$ or $\gamma_{1}:=(m_{1}\cup\beta)\setminus (c_{0}\cup \eta)$ otherwise. This completes the proof of theorem \ref{minimalgenus} in the absence of bow tie handles.\\

\textbf{Remark - disconnected and immersed surfaces.} Embeddedness of the surface was used in the proof of theorem 1.1 to ensure that intersections with level sets are embedded. For disconnected surfaces, the notion of embeddedness is not strong enough, because the connected components of the surface might intersect when boundary curves are forced to lie in the same level sets. A weaker statement is obtained in lemma \ref{weak} for immersed surfaces.

\subsection{Proof with bow tie handles} It remains to prove the theorem in the case of bow tie handles.\\

Examples of orientable surfaces in $S\times \mathbb{R}$ with boundary $m_{2}-m_{1}$, whose handle decomposition contains bow tie handles are not difficult to construct. For example, given a simple path $m_{1}, \gamma_{1}, m_{2}$, construct a surface in $S\times \mathbb{R}$ by gluing together two subsurfaces that project onto subsurfaces of $S\times 0$; one with the subsurface orientation of $S\times 0$ and one with the opposite orientation. \\

The key observation here is that the resulting bow tie handles occur in pairs, otherwise the boundary of the surface could not be a union of two multicurves. Similarly, when constructing paths in $\mathcal{HC}(S, \alpha)$, points of intersection were removed in pairs, where each pair consisted of intersections with opposite handedness. When attempting to construct a pants decomposition of $H$ corresponding to a path $m_{1}, \gamma_{1}, \gamma_{2},\ldots ,m_{2}$ in $\mathcal{HC}(S,\alpha)$, no $\gamma_i$ is not allowed to separate a bow tie handle in $H$ from all possible partners. The argument in this section shows that this restriction is sufficient to obtain a path $m_{1}, \gamma_{1}, \gamma_{2},\ldots ,m_{2}$ in $\mathcal{HC}(S,\alpha)$ from which $H$ is constructed. \\

%\begin{definition}[Twisted skirt]
%\selectlanguage{german}
%A twisted skirt is homotopic to an annulus with a bow tie handle attached. The bow tie handle is also required to have both endpoints on the same boundary component of the annulus. Since it is being assumed that all twisted handles only have a single half twist, a twisted skirt has only two boundary components, hence the name. Alternatively, a twisted skirt is a one holed M\"obius band.
%\selectlanguage{english}
%\end{definition}

%When there are no bow tie handles, the algorithm used to construct the sequence $c_{0},\gamma_{1},\gamma_{2}\ldots\gamma_{j}, c_{1}$ gave a handle decomposition of the surface, and specified in which order the handles were attached. If $i$ is as large as possible so that none of the first $i$ handles are bow tie handles, there is a multicurve $\gamma_i$ on $H$ such that $H\setminus \gamma_i$ has two components and that the component with boundary curves $\gamma_{i}-c_0$ satisfies the claims made in the statement of this theorem. 

%A handle decomposition of $H$ defines a set of handles and an order in which they are attached to the component with boundary $m_1$. This ordering of the handles is not quite the same thing as the $R$ coordinate of the corresponding critical points, since $H_0$ could have many components.

\begin{figure}
\centering
\includegraphics[width=13cm]{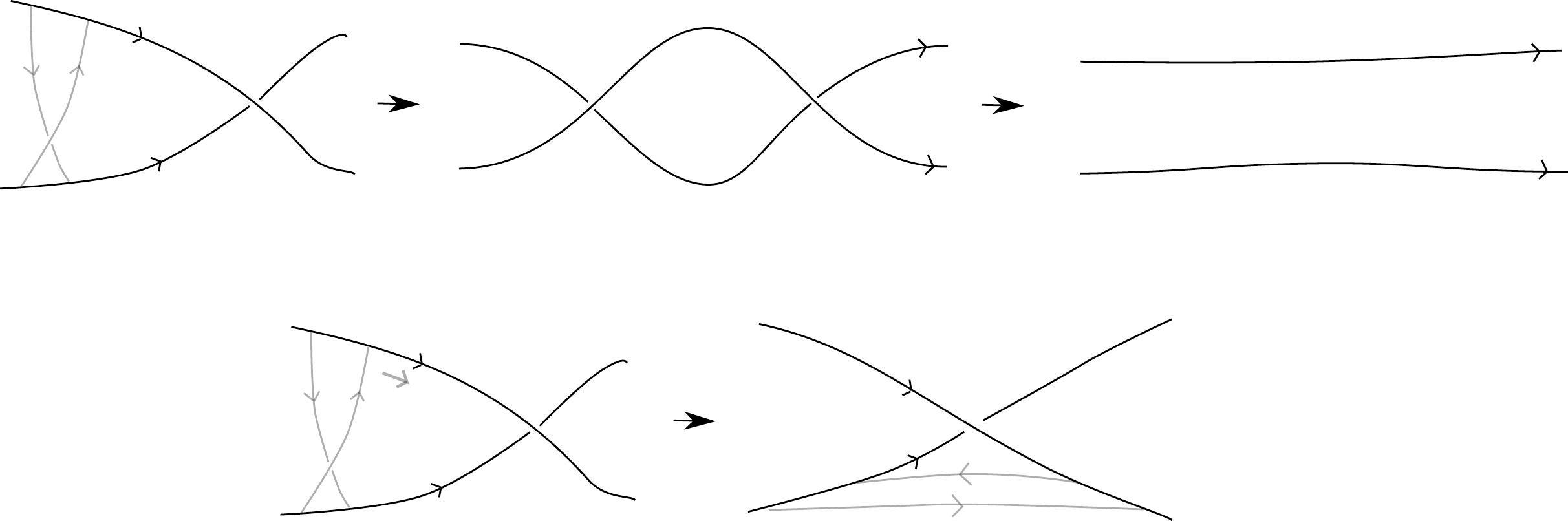}
\caption{A bow tie handle that decreases intersection number is fake.}
\label{fakingit}
\end{figure}

%Let $\gamma_i$ be the multicurve constructed as in section \ref{Handledecomp}, where $i$ is as large as possible such that the handle decomposition of the subsurface of $H$ with boundary $\gamma_{i}-m_{1}$ does not have any bow tie handles. It will be shown that the handle decomposition of the subsurface of $H$ with boundary $m_{2}-\gamma_i$ can not consist of bow tie handles only, unless they are fake or $\gamma_i$ is homotopic to $m_2$. This is then used to modify the metric on $S\times \mathbb{R}$, resulting in a new Morse function. This new Morse function has the property that the handle decomposition from section \ref{Handledecomp} can be continued to obtain $\gamma_{i+1}$.\\

\textbf{A nontrivial handle decomposition of the subsurface of $H$ with boundary $m_{2}-\gamma_{i}$ can not consist of bow tie handles only, unless they are fake.} In \cite{FarbMargalit}, it was proven in Proposition 1.7 that two simple transverse curves  $a$ and $b$ on $S$ are not in minimal position iff there is a bigon in $S$ with boundary consisting of a subarc of $a$ and a subarc of $b$. The arguments given in the proof also show that if $a$ is not simple, $a$ is not in minimal position iff there is a subarc of $a$ that bounds a disk in $S$ or there is a bigon in $S$ bounded by two subarcs of $a$. It follows that if a bow tie handle is attached to two simple, disjoint, boundary curves, the resulting boundary curve will be simple iff at least one of the curves is contractible. Similarly, if attaching a bow tie handle gives new boundary curves with smaller (self)intersection number, the bow tie handle is necessarily fake. This is illustrated in figure \ref{fakingit}, where it is shown that the bow tie handle can be isotoped through the bigon to obtain an ordinary handle. Since attaching bow tie handles creates points of intersection on the boundary curves, and can not remove points of intersection amongst the boundary curves, the claim follows.\\

Remark - The claim in the previous paragraph is not true for pant decompositions of surfaces in $S\times \mathbb{R}$ in general. For example, consider an embedded pant $P$ in $S\times \mathbb{R}$ with boundary curves $a, b$ and $c$, where $a$ projects onto a simple curve in $S\times \left\{0\right\}$, and $b$ and $c$ do not. Since the curves $b$ and $c$ can not be contained in level sets of $\mathbb{R}$, the Morse function $m_{R}$ on $P$ will have critical points coming from local extrema on the boundary curves $b$ and $c$, and the corresponding surgeries can not in general be paired up to give a decomposition by ordinary handles.\\

%There are standard results in Morse theory that describe how to modify a Morse function so as to change the order in which handles are attached. For this proof, it is also necessary to restrict to Morse functions that can be realised as the projection to the $\mathbb{R}$ coordinate of an embedded surface. Let $\gamma_{i}^{'}$ be a multicurve in the subsurface of $H$ with boundary $m_{2}-\gamma_{i}$, homotopic to $\gamma_i$. Since $\gamma_{i}^{'}$ is a multicurve, it is possible to find a new metric on $S\times \mathbb{R}$ so that $\gamma_{i}^{'}$ is in the 0 level set of the $\mathbb{R}$ coordinate. This new metric will induce a new Morse function $m_{R^{'}}$. \\

%Let $p$ be a critical point of $m_{R}$ corresponding to an ordinary handle, the existence of which is guaranteed by the previous argument. If $\gamma_{i}^{'}$ does not pass through $p$, let $N$ be a small neighbourhood of $p$ disjoint from $\gamma_{i}^{'}$, and assume that the new metric on $S\times \mathbb{R}$ was obtained without altering the metric inside $N$. 

%%%%%%%%%%%%%%%%%%%%Add something here. Changing order of handle adding
Let $i$ be as large as possible such that the deltas can be constructed as in the previous section without encountering bow tie handles, and let $\delta_{i}^{'}$ be a multicurve homotopic to $\delta_i$, where $\delta_{i}^{'}$ is on the boundary of the subsurface of $H$ obtained by cutting off the first $i$ pants. Since $\delta_{i}^{'}$ is a multicurve, as in subsection \ref{reorder}, it is possible to find a new metric on $S\times \mathbb{R}$ so that $\delta_{i}^{'}$ is in the zero level set of the $\mathbb{R}$ coordinate. With respect to this new zero of the $\mathbb{R}$ coordinate, let $p$ be a critical point whose corresponding surgery consists of attaching an ordinary handle. Then if $\delta_{i}^{'}$ passes sufficiently close to $p$, the Morse function $m_{R^{'}}$ has the desired properties. In particular, with respect to $m_{R^{'}}$, the ``next critical point'' above $\delta_{i}^{'}$ gives rise to a surgery that consists of attaching an ordinary handle. In this way, the multicurve $\delta_{i+1}$ is constructed. This argument can be repeated with the subsurface of $H$ obtained by cutting off the first $i+1$ pants.\\

\end{proof}

\subsection{Immersed Surfaces}
One reason for studying $\mathcal{HC}(S,\alpha)$ is that it can be used for constructing minimal genus surfaces in $S\times \mathbb{R}$. Not all such minimal genus surfaces are embedded, as shown in \cite{Me2} example 13. For immersed surfaces in $S\times \mathbb{R}$, a slightly weaker form of theorem \ref{minimalgenus} is obtained.

\begin{lemma}
\label{weak}
Suppose $H$ is an oriented, immersed, incompressible surface in $S\times \mathbb{R}$. Then $H$ is homotopic to a union of subsurfaces $T_i$ glued along homotopic boundary curves, as described in section \ref{surfaceconstruction}. Each of the $T_i$ is homotopic to an embedded subsurface of $S\times 0$.

\end{lemma}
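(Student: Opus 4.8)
The plan is to reduce Lemma~\ref{weak} to the already-proven embedded case of Theorem~\ref{minimalgenus} by carefully isolating where embeddedness was actually used. As noted in the ``Remark - disconnected and immersed surfaces'' above, embeddedness entered the proof of Theorem~\ref{minimalgenus} only to guarantee that intersections of $H$ with a level set $S\times\{a\}$ are themselves embedded one-manifolds (unions of curves and arcs projecting injectively to $S\times 0$), so that $\eta\cup\beta$, $\delta_i$, etc., are genuine multicurves. For an immersed $H$ this fails in general, so the first step is to put $H$ in \emph{general position} with respect to the $\mathbb{R}$-coordinate: apply Lemma~\ref{morsetheory} (which is stated for embedded $H$, so I must first observe that its proof goes through verbatim for immersed surfaces, since the homotopies constructed there are local near $\partial H$ and do not use embeddedness of the interior) to make $m_R$ a Morse function with distinct critical values, and then further perturb so that the self-intersection set of $H$ is a one-complex in generic position relative to $m_R$. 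This makes each noncritical level set $H(a)$ an immersed, rather than embedded, one-manifold in $S\times\{a\}$.

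Second, I would run the handle/pants decomposition of \secref{Handledecomp} exactly as before, building multicurves $\delta_1,\delta_2,\dots,\delta_n$ by isolating pairs of pants below successive critical levels, \emph{but} now working with \emph{homotopy classes} throughout rather than embedded representatives. The key point is that the curves $\eta$, $\beta$, $\delta_i$ arising from attaching a single (ordinary or bow tie) handle to annuli over a multicurve are still each individually simple closed curves on $H$, hence map to closed curves on $S$; after the standard surgery-reordering argument of \secref{reorder} (again phrased in terms of homotopy classes, which only needs self-intersection number zero of the relevant curves, not embeddedness of $H$) one can assume each newly created boundary curve is either contractible — handled by incompressibility and asphericity of $S\times\mathbb{R}$ exactly as in the embedded proof — or homotopic to an essential simple closed curve. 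Thus each successive subsurface cut off is homotopic to an incompressible subsurface of $S\times 0$, and the $\delta_i$ assemble $H$ up to homotopy as a gluing of such subsurfaces along homotopic boundary curves, which is exactly the conclusion of the lemma. Note that we do \emph{not} claim the $\delta_i$ form a path in $\mathcal{HC}(S,\alpha)$ — that was the extra content of Theorem~\ref{minimalgenus} obtained via Remark~\ref{remark} and the simultaneous-up-collar argument, which genuinely used embeddedness to control how distinct components meet when forced into a common level set; this is precisely the content that Lemma~\ref{weak} drops.

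The bow tie handle analysis of the previous subsection also carries over: the statement that a maximal initial segment of $\delta_i$'s can be constructed without bow tie handles, and that thereafter one reorders so the next critical point above $\delta_i'$ is an ordinary handle, used only that $\delta_i'$ is a multicurve (true here, as each $\delta_i$ is a disjoint union of simple closed curves on the abstract surface $H$, whose images we may take disjoint after homotopy of $H$) together with the density of Morse functions — no interior embeddedness. Hence the same inductive construction produces subsurfaces $T_i$ with $H\simeq\bigcup_i T_i$ glued along homotopic boundary curves, each $T_i$ homotopic to an embedded incompressible subsurface of $S\times 0$.

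The main obstacle I expect is bookkeeping the failure of embeddedness cleanly: specifically, verifying that the ``cut off a pair of pants to get $H_1$'' step still makes sense when $H$ is merely immersed. The subtlety is that the pair of pants $P$ with boundary $c_0\cup\delta_1$ sits in $H$ abstractly, but its image in $S\times\mathbb{R}$ may intersect the image of the rest of $H$; what I need is that $P$ is an abstract subsurface of the domain surface, cut along an embedded one-submanifold $\delta_1$ of the \emph{domain}, so that $H_1$ is a well-defined immersed surface with $\delta_1\subset\partial H_1$ — and then the induction is on the (finite) number of critical points of $m_R$, which strictly decreases. Making this ``work on the domain surface, not the image'' viewpoint precise, and checking that the surgery-reordering and contractible-curve-removal homotopies are homotopies of the \emph{map} (not requiring the image to stay embedded), is the crux; once that is in place the rest is a transcription of the embedded proof with ``embedded representative'' replaced by ``immersed representative / homotopy class'' everywhere.
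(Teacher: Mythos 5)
Your proposal takes a genuinely different route from the paper, and unfortunately it has a real gap. The paper does not re-run the Morse/handle argument on the immersed surface at all. Instead it invokes Scott's theorem that surface groups are LERF (\cite{Scott}, Theorem 3.3) to pass to a finite cover $\tilde S\times\mathbb{R}$ in which $H$ lifts to an \emph{embedded} surface $\tilde H$, applies Theorem~\ref{minimalgenus} verbatim to $\tilde H$, and then projects the resulting decomposition back down to $S\times\mathbb{R}$, refining so that each piece of $\tilde H$ sits inside a neighbourhood of $\tilde S$ that maps injectively to $S$. The entire burden of ``simplicity of level-set curves'' is thus discharged upstairs, where the surface really is embedded, and the price paid is exactly the loss recorded after the lemma: the projected gluing curves need not assemble into a path in $\mathcal{HC}(S,\alpha)$.

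The gap in your argument is the step where you assert that ``one can assume each newly created boundary curve is either contractible \ldots or homotopic to an essential simple closed curve,'' and the companion claim that each $\delta_i$ is ``a disjoint union of simple closed curves on the abstract surface $H$, whose images we may take disjoint after homotopy.'' Neither claim is justified for an immersed $H$. The Morse level set $H(a)$ is now only an \emph{immersed} one-manifold in $S\times\{a\}$, so a curve $\delta_i$ that is simple on the abstract domain surface can project to a closed curve in $S$ with essential self-intersections, or to a pair of curves with nonzero geometric intersection number — and in that case it is not freely homotopic to a multicurve, and no homotopy of the map $H\to S\times\mathbb{R}$ can make the images disjoint. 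This is precisely what must hold for the cut-off piece $T_i$ to be homotopic to an \emph{embedded} subsurface of $S\times 0$, which is the actual content of the lemma. In the embedded proof this simplicity came for free from embeddedness of $H(a)$; for an immersed surface it is false in general, which is why the paper reaches for the LERF machinery. You correctly sense that something must be verified about the ``cut off a pair of pants'' step for immersed $H$, but you locate the difficulty in domain-vs-image bookkeeping rather than in the failure of simplicity of the $\delta_i$, and the latter is the step that genuinely breaks.
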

\begin{proof}
If $H$ is not embedded, it follows from standard arguments based on theorem 3.3 of \cite{Scott} that there is a finite index covering space, $\tilde{S}\times \mathbb{R}$, of $S\times \mathbb{R}$, such that $H$ can be lifted to an embedded surface, $\tilde{H}$, in $\tilde{S}\times \mathbb{R}$. Let $\tilde{\alpha}$ be the integral homology class of $\tilde{S}$ containing the lift of $m_1$ to the boundary of $\tilde{H}$.\\

The same arguments as in the proof of theorem \ref{minimalgenus} show that $\tilde{H}$ is homotopic to a surface constructed from a path $\tilde{m}_{1}, \tilde{\gamma}_{1}, \tilde{\gamma}_{2},\ldots,\tilde{m}_2$ as described in section \ref{surfaceconstruction}. The surface $\tilde{S}$ can be covered by a finite number of neighbourhoods that project one to one onto $S$, and so therefore can $\tilde{H}$, from which the lemma follows. 
\end{proof}

Note that it is not possible to argue, as in the proof of theorem \ref{minimalgenus}, that the decomposition of $H$ into the subsurfaces $T_i$ determines a path in $\mathcal{HC}(S,\alpha)$. All that can be shown is that each curve in $\tilde{\gamma}_i$ projects onto a curve in $S$, and that the projection of $\partial T_i$ is a multicurve. It does not follow that the projection of $\tilde{\gamma}_i$ is a multicurve. \\

%$c_{1} \cup \gamma_{j}$ can not bound a subset of $S$, because that would be a contradiction. So $\gamma_j$ has to contain null homologous submulticurves. So there exist simple close curves that intersect $\gamma_j$ in vertical arcs. 

%Is it necessary to use that twisted skirts arel not allowed in an oriented surface?

%Need to use that $c_1$ is simple, otherwise twisted handles are necessary, give example

%note: simpleness and embeddedness - give more careful defn of simple in the case that $\gamma_{i+1}$ and $\gamma_i$ both contain a curve $c$, then the corresponding edge contains an annulus with core curve $c$, and this needs to be taken into account when defining simpleness, then embeddedness follows.

\bibliographystyle{plain}

\bibliography{bib2}

\end{document}